\newcommand{\conv}{\ensuremath{\text{\rm conv}\,}}
\newtheorem{teo}{Theorem}[section]
\newtheorem{prop}[teo]{Proposition}
\newtheorem{lem}[teo]{Lemma}
\newtheorem{pro}[teo]{Problem}
\newtheorem{remark}[teo]{Remark}
\newcommand{\N}{\mathbb N}
\newcommand{\R}{\mathbb R}
\renewcommand{\H}{\mathcal{H}}
\newcommand{\G}{\mathcal G}
\newcommand{\HH}{{\bm{\mathcal{H}}}}
\newcommand{\id}{\textnormal{Id}}
\newcommand{\lev}{\textnormal{lev}}
\newcommand{\inte}{\ensuremath{\operatorname{int}}}
\newcommand{\interior}{\textnormal{int}}
\newcommand{\s}{\sigma}
\newcommand{\weak}{\rightharpoonup}
\newcommand{\ran}{\textnormal{ran}\,}
\newcommand{\dom}{\textnormal{dom}\,}
\newcommand{\bdry}{\ensuremath{\text{\rm bdry}\,}}
\newcommand{\zer}{\textnormal{zer}}
\newcommand{\gra}{\textnormal{gra}\,}
\newcommand{\argm}[1]{\underset{#1}{\argmin\, }}
\newcommand{\Scal}[2]{{\bigg\langle{{#1}\:\bigg |~{#2}}\bigg\rangle}}
\newcommand{\scal}[2]{{\left\langle{{#1}\mid{#2}}\right\rangle}}
\newcommand{\pnorm}[1]{|\hspace{-.3mm}|\hspace{-.3mm}|{#1}|\hspace{-.3mm}|\hspace{-.3mm}|}
\newcommand{\menge}[2]{\big\{{#1}~\big |~{#2}\big\}} 
\newcommand{\pinf}{\ensuremath{{+\infty}}}
\newcommand{\RP}{\ensuremath{\left[0,+\infty\right[}}
\newcommand{\RM}{\ensuremath{\left]-\infty,0\right]}}
\newcommand{\RPP}{\ensuremath{\left]0,+\infty\right[}}
\newcommand{\RX}{\ensuremath{\left]-\infty,+\infty\right]}}
\newcommand{\RXX}{\ensuremath{\left[-\infty,+\infty\right]}}
\newcommand{\sri}{\ensuremath{\text{\rm sri}\,}}
\newcommand{\prox}{\ensuremath{\text{\rm prox}}}
\newcommand{\weakly}{\ensuremath{\:\rightharpoonup\:}}
\numberwithin{equation}{section}
\DeclareSymbolFont{fouriersymbols}{FMS}{futm}{m}{n}
\DeclareSymbolFont{fourierlargesymbols}{FMX}{futm}{m}{n}
\DeclareMathDelimiter{\nr}{\mathord}{fouriersymbols}{152}{fourierlargesymbols}{147}
\DeclareMathOperator*{\argmin}{arg\,min}
\DeclareMathDelimiter{\nr}{\mathord}{fouriersymbols}{152}{fourierlargesymbols}{147}
\title[{Four Operator Splitting}]{Four Operator Splitting via a 
Forward-Backward-Half-Forward Algorithm with Line Search}
\author{Luis M. Brice\~no-Arias \& Fernando Rold\'an}
\address{Departamento de Matem\'{a}tica, Universidad T\'{e}cnica Federico Santa Mar\'{i}a, Avenida Espa\~{n}a 1680, Valpara\'{i}so, Chile}
\email{luis.briceno@usm.cl, fernando.roldan@usm.cl}
\subjclass[2010]{47H05, 47H10, 65K05, 65K15, 90C25, 49M29.}
\begin{document}

\begin{abstract}In this article we provide a splitting method for 
	solving monotone inclusions in a real Hilbert space involving 
	four operators: a maximally monotone, a monotone-Lipschitzian, a
	cocoercive, and a monotone-continuous operator.  The proposed 
	method takes 
	advantage of the intrinsic properties of each operator, 
	generalizing 
	the 
	forward-back-half forward splitting and the Tseng's algorithm 
	with line-search. At each iteration, our 
	algorithm defines the step-size by using a line search in which 
	the 
	monotone-Lipschitzian and the cocoercive operators need only one 
	activation. We also derive 
	a method for solving non-linearly  constrained composite convex 
	optimization problems in real Hilbert spaces. Finally, we 
	implement 
	our algorithm in a non-linearly constrained least-square 
	problem 
	and 
	we compare its performance with available methods in the 
	literature.
\par
\bigskip

\noindent \textbf{Keywords.} {\it Convex Optimization, Cocoercive 
Operator, Lipchitizian Operator, Monotone Operator Theory, Splitting 
Algorithms.}
\end{abstract}

\maketitle

\section{Introduction}\label{Intro}

In this paper, we aim at finding a zero of the sum of a maximally 
monotone, a cocoercive, a monotone-Lipschitzian, and a 
monotone-continuous operators, which is also in a convex closed 
subset $X$ of a Hilbert space.
This inclusion encompasses several problems in partial 
differential equations coming from mechanical models 
\cite{Gabay1983,Glowinski1975,Goldstein1964}, differential 
inclusions \cite{AubinHelene2009,Showalter1997}, game theory 
\cite{Nash13}, 
among other disciplines. When $X$ is the whole Hilbert space,
the algorithms proposed in
\cite{Bot-2019-ACM,Bot2016NA,Bot2013,Bot2015AMC,BotHendrich2013,briceno2011SIAM,CombettesMinh2022,Cevher2020SVVA,Combettes2018MP,CombPes12,Combettes2014Optimization,Csetnek2019AMO,davis2015,Dong2021,DungVu15,Eckstein2017,Eckstein2020,EcksteinJohnstone,Malitsky2020SIAMJO,Raguet-SIAM-2013,Rieger2020AMC,RyuVu2020,Vu13}
can solve this kind of problems  under additional assumptions 
or without exploiting the intrinsic properties of the involved 
operators.
Indeed, the algorithms in 
\cite{Bot2013,Bot2015AMC,BotHendrich2013,briceno2011SIAM,Combettes2018MP,Eckstein2017}
need to compute the resolvents of all the monotone operators, which 
are not explicit in general or they can be 
numerically expensive. The schemes proposed in  
\cite{Bot2016NA,CombPes12,DungVu15,Eckstein2020} take 
advantage of the 
properties of the monotone-Lipschitzian operator, but the 
cocoercivity and the continuity of the others are not leveraged. In 
fact, 
the algorithms in 
\cite{Bot2016NA,CombPes12,DungVu15,Eckstein2020} need to 
activate the continuous operator via its resolvent and to explicitly 
implement the cocoercive and the monotone-Lipschitzian operators 
twice by iteration. In contrast, the algorithms in 
\cite{Cevher2020SVVA,Csetnek2019AMO,Malitsky2020SIAMJO,Rieger2020AMC,RyuVu2020}
activate the cocoercive and the monotone-Lipschitzian 
operators only once by iteration, but they need to store in 
the 
memory the two past iterations and the step-size is reduced 
significantly. Furthermore, the methods in 
\cite{Cevher2020SVVA,Csetnek2019AMO,Malitsky2020SIAMJO} consider only 
one maximally monotone operator and, hence, it needs to compute 
the resolvent of the sum of the maximally monotone and the 
monotone-continuous operator. On the other hand, methods in 
\cite{Rieger2020AMC,RyuVu2020} need to calculate the resolvent of 
the monotone-continuous operator, which is not simple in general.
In addition, methods proposed in 
\cite{Bot-2019-ACM,Combettes2014Optimization,davis2015,Dong2021,EcksteinJohnstone,Raguet-SIAM-2013,Vu13}
take advantage of the cocoercive operator by activating it once by 
iteration, but they do not exploit continuity nor the 
monotone-Lipschitzian
property of the operators and they need to compute their resolvents. 
The method in \cite{CombettesMinh2022} exploits the properties of 
the cocoercive and the monotone-Lipschitzian operators, but it does 
not take into advantage the monotonicity and continuity of one of the 
operators and need to compute its resolvent. 
Other methods solving this kind of problems including a normal cone 
to a closed vector subspace, 
when the continuous operator is zero and either the cocoercive or the 
monotone-Lipschitzian operator is zero, are discussed in 
\cite{briceno2015Optim,Briceno2015JOTA,Spingran1983AMO}. 

In the case when $X$ is a subset of the domain of the maximally 
monotone operator, 
methods exploiting the monotonicity and continuity property of one of 
the operators involved in our problem are proposed in 
\cite{BricenoDavis2018,Tseng2000SIAM}. In particular, the algorithm 
in  \cite{Tseng2000SIAM} solves our problem by using a line search
procedure involving the sum of the cocoercive, the 
monotone-Lipschitzian, and the 
monotone-continuous operators. On the other hand, the 
\textit{forward-backward-half forward splitting} (FBHF) 
method, proposed in \cite{BricenoDavis2018}, solves
our problem via a line search which activates the sum of the 
the monotone-Lipschitzian and the monotone-continuous 
operators.
As perceived in \cite{BricenoDavis2018}, to reduce the activation of 
monotone operators in the line search procedure can reduce the 
number of iterations significantly (see, e.g., 
\cite[Table~3]{BricenoDavis2018} in which this reduction is around a 
20\%). Moreover, when the explicit implementation of these operators 
are expensive (e.g., high dimensional problems), the computational 
time can be much larger because line search procedures need to 
activate those operators several times by iteration. 

In this paper, we propose a fully split method for solving our 
monotone 
inclusion, which take advantage of the intrinsic properties of each 
operator involved in the inclusion. More precisely, the proposed 
method activates the cocoercive operator once, the 
monotone-Lipschitzian operator twice by iteration, and uses a line 
search only implementing the monotone-continuous operator. We 
also ensure the weak convergence of our algorithm under 
hypotheses on the set $X$ which are
independent of the domain of the maximally monotone operator,
generalizing some results in
\cite{BricenoDavis2018,Tseng2000SIAM}.
We explore an interesting example in optimization, involving 
non-linear inequality constraints governed by convex G\^ateaux 
differentiable functions. We provide conditions on this function in 
order 
to guarantee that the saddle operator obtained from the Lagrangian of 
the problem is monotone and continuous and satisfies the hypotheses 
of our main convergence theorem. Finally, we 
provide a numerical experiment which illustrate the efficiency of our 
algorithm.

The paper is organized as follows. In Section~\ref{sec:2} we set our 
notation. In Section~\ref{sec:3} we provide some technical lemmas, 
our splitting  method for solving Problem~\ref{prob:problem1}, and our
convergence result. In Section~\ref{sec:4} we derive an algorithm for 
solving a non-linearly constrained composite convex
optimization problem. Finally, in Section~\ref{sec:5} we provide a 
numerical experiment illustrating the efficiency of the 
method proposed 
in Section~\ref{sec:4}.

\section{Preliminaries}\label{sec:2}
Throughout this paper $\H$ and $\G$ are real Hilbert spaces. We 
denote the scalar 
product by $\scal{\cdot}{\cdot}$ and the associated norm by 
$\|\cdot \|.$ 
The symbols $\weakly$ and $\to$ denote the weak and strong 
convergence, respectively.
Given a linear bounded 
operator $M:\H \to \G$, we denote its adjoint by $M^*\colon\G\to\H$. 
$\id$ denotes the identity operator on $\H$. 
Let $D\subset \H$ be non-empty, let $T: D \rightarrow \H$, and let 
$\beta 
\in \left]0,+\infty\right[$. 
The operator $T$ is 
$\beta$-cocoercive if 
\begin{equation} \label{def:coco}
	(\forall x \in D) (\forall y \in D)\quad \langle x-y \mid Tx-Ty 
	\rangle \geq 
	\beta \|Tx - Ty 
	\|^2
\end{equation}
and it is $\beta$-Lipschitzian if 
\begin{equation} \label{def:Lips}
	(\forall x \in D) (\forall y \in D)\quad \|Tx-Ty\| \leq  \beta\|x 
	- y 
	\|.
\end{equation}

Let $A:\H \rightarrow 2^{\H}$ be a set-valued operator. The domain, 
range, graph, and the zeros of $A$ are, respectively, $\dom\, A = 
\menge{x \in \H}{Ax \neq  \varnothing}$, $\ran\, A = \menge{u \in 
	\H}{(\exists x \in \H)\, u \in Ax}$, $\gra A = \menge{(x,u) \in 
	\H 
	\times \H}{u \in Ax}$, and $\zer A = \menge{x \in \H}{0 \in Ax}$.
The inverse of $A$ is $A^{-1}\colon \H  \to 
2^\H \colon u 
\mapsto 
\menge{x \in \H}{u \in Ax}$ and the resolvent of $A$ is 
$J_A=(\id+A)^{-1}$. 
The operator $A$  is monotone if 
\begin{equation}\label{def:monotone}
	(\forall (x,u) \in \gra A) (\forall (y,v) \in \gra A)\quad 
	\scal{x-y}{u-v} \geq 
	0.
\end{equation}
Moreover, $A$ is maximally monotone if it is monotone and there 
exists no 
monotone operator $B :\H\to  2^{\H}$ such that $\gra B$ properly 
contains $\gra A$, i.e., for every $(x,u) \in \H \times \H$,
\begin{equation} \label{def:maxmonotone}
	(x,u) \in \gra A \quad \Leftrightarrow\quad  (\forall (y,v) \in 
	\gra A)\ \ 
	\langle x-y \mid u-v \rangle \geq 0.
\end{equation}
$A$ is locally bounded at $x \in\H$, if there exists $\delta \in 
\RPP$ such that $A(\mathcal{B}(x;\delta))$ is bounded, where 
$\mathcal{B}(x;\delta)$ stands for the ball centered at $x$ with 
radius $\delta$. Moreover, $A$ is 
locally bounded in $\varnothing \neq D \subset \H$ if, for every $x 
\in D$, $A$ is locally bounded at $x$.

We denote by $\Gamma_0(\H)$ the class of proper lower 
semicontinuous convex functions $f\colon\H\to\RX$. Let 
$f\in\Gamma_0(\H)$.
The Fenchel conjugate of $f$ is 
defined by $f^*\colon u\mapsto \sup_{x\in\H}(\scal{x}{u}-f(x))$, 
which 
is a function in $\Gamma_0(\H)$. The subdifferential of $f$ is the 
maximally monotone operator
$$\partial f\colon \H \to 2^\H \colon x\mapsto 
\menge{u\in\H}{(\forall y\in\H)\:\: 
	f(x)+\scal{y-x}{u}\le f(y)}.$$
It turns out that $(\partial f)^{-1}=\partial f^*$
and that $\zer\partial f$ is the set of 
minimizers of $f$, which is denoted by $\arg\min_{x\in \H}f(x)$. 
We denote the proximity operator of $f$ by
\begin{equation}
	\label{e:prox}
	\prox_{f}\colon 
	x\mapsto\argm{y\in\H}\left(f(y)+\frac{1}{2}\|x-y\|^2\right).
\end{equation}
We have 
$\prox_f=J_{\partial f}$. Moreover, it follows from \cite[Theorem 
14.3]{bauschkebook2017} that
\begin{equation}
	\big(\forall \gamma >0) \quad	
	\label{e:Moreau_nonsme}
	\prox_{\gamma f}+\gamma \prox_{f^*/\gamma} \circ 
	\id/\gamma=\id.
\end{equation}
We denote by 
$\lev_{<0}f=\menge{x\in\H}{f(x)<0}$ and by
$\lev_{\le0}f=\menge{x\in\H}{f(x)\le0}$.
Given a non-empty closed convex set $C\subset\H$, we denote by 
$P_C$ the projection onto $C$ and by
$\iota_C\in\Gamma_0(\H)$ the indicator function of $C$, which 
takes the value $0$ in $C$ and $\pinf$ otherwise. 
For further properties of monotone operators,
nonexpansive mappings, and convex analysis, the 
reader is referred to \cite{bauschkebook2017}.

\section{Main Results}\label{sec:3}
We aim at solving the following monotone inclusion 
problem.
\begin{pro}\label{prob:problem1}
	Let $X$ be a nonempty closed convex subset of a real Hilbert 
	space 
	$\H$, let $A: \H \to 2^\H$ be a  maximally monotone operator, let 
	$B_1 : \H \to \H$ be a $\beta$-cocoercive operator for some 
	$\beta 
	>0$, let $B_2 :\H \to \H$ be a monotone and $L$-Lipschitzian 
	operator for some $L>0$, and let $B_3 : \H \to 2^\H$ be a 
	maximally 
	monotone operator such that $B_3$ is single valued and 
	continuous 
	in $\dom A \cup X \subset \dom B_3$. Moreover assume that 
	$A+B_3$ 
	is 
	maximally 
	monotone. The 
	problem is to 
	\begin{equation}\label{eq:problem1}
		\text{find} \quad x \in X \quad \text{such that} \quad 0 \in 
		Ax+B_1x+B_2x+B_3x,
	\end{equation}
	under the assumption that the set of solutions to 
	\eqref{prob:problem1} is nonempty.
\end{pro}

We first study some properties of the monotone 
operators involved in Problem~\ref{prob:problem1}, which ensure the 
finite termination of the backtracking procedure in our method. 

\begin{lem}\label{prop:varphiz}
	In the context of Problem~\ref{prob:problem1}, let $z$ and $y$ in 
	$\H$, and define 
	\begin{equation}\label{eq:varphiz}
		(\forall \gamma > 0) \quad \quad	x_{z,y}(\gamma) = 
		J_{\gamma A} 
		( z - 
		\gamma y) \quad \text{and} 
		\quad \varphi_{z,y}( \gamma) = \frac{\| z- 
			x_{z,y}(\gamma)\|}{\gamma}.
	\end{equation}
	Then, the following statements holds:
	\begin{enumerate}
		\item\label{prop:varphiz1}
		$\varphi_{z,y}$ is nonincreasing.
		\item\label{prop:varphiz2}
		$(\forall z \in \dom A) \displaystyle  \quad \lim_{ \gamma 
			\downarrow 0} 
		\varphi_{z,y}(\gamma) = \| (A+y)^0 z\| = \min_{w \in 
			Az+y}\|w\|
		$.
		\item \label{prop:varphiz23} Set
		\begin{equation}\label{eq:defsetC}
			\mathcal{C}=\menge{z \in \H}{ \lim_{\gamma \downarrow 0} 
				\varphi_{z,y}(\gamma)  < +\infty}.
		\end{equation} 
		Then, $\dom A \subset \mathcal{C} 
		\subset \overline{\dom} A$.
		\item\label{prop:varphiz3} Suppose that one of the following 
		holds:
		\begin{enumerate}
			\item\label{prop:varphiz31} $z \in \mathcal{C}$.
			
			\item\label{prop:varphiz32} $z \in \dom B_3 
			\setminus\mathcal{C}$,  
			$y=(B_1+B_2+B_3)z$, and $B_3$ is locally bounded at 
			$P_{\overline {\dom} A} z$. 		
			\item\label{prop:varphiz33}  $z \in \dom B_3 
			\setminus\mathcal{C}$, 
			$y=(B_1+B_2+B_3)z$, and $\overline{\dom} A \subset \inte 
			\dom 
			B_3$.
			
		\end{enumerate}
		Then, for every 
		$\theta \in \left]0,1\right[$, there exists $\gamma(z) >0$ 
		such 
		that, for 
		every  $\gamma \in \left]0, \gamma(z)\right]$,
		\begin{equation}\label{eq:destheta}
			\gamma \| B_3z - B_3 x_{z,y} (\gamma) \| \leq \theta \|z 
			- x_{z,y} 
			(\gamma)\|.
		\end{equation}

	\end{enumerate}
\end{lem}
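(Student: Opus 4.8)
\emph{Reformulation and parts (1)--(2).} The plan is to recast $\varphi_{z,y}$ as the norm of a Yosida approximation. Writing $\tilde{A} := A + y$, which is maximally monotone with $\dom \tilde{A} = \dom A$, one has $x_{z,y}(\gamma) = J_{\gamma A}(z-\gamma y) = J_{\gamma \tilde{A}}\,z$, and therefore $s_\gamma := (z - x_{z,y}(\gamma))/\gamma \in \tilde{A}\,x_{z,y}(\gamma)$; thus $\varphi_{z,y}(\gamma) = \|s_\gamma\|$ is the norm of the Yosida approximation of $\tilde A$ at $z$. For (1) I take $0 < \lambda \le \mu$, set $a := \varphi_{z,y}(\mu) = \|s_\mu\|$ and $b := \varphi_{z,y}(\lambda) = \|s_\lambda\|$, and apply the monotonicity of $\tilde A$ to the pairs $(x_{z,y}(\lambda),s_\lambda)$ and $(x_{z,y}(\mu),s_\mu)$ of $\gra \tilde{A}$; using $x_{z,y}(\lambda) - x_{z,y}(\mu) = \mu s_\mu - \lambda s_\lambda$ together with Cauchy--Schwarz reduces the resulting inequality to $(a-b)(\mu a - \lambda b) \le 0$, which forces $a \le b$ and proves that $\varphi_{z,y}$ is nonincreasing. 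For (2), given $z \in \dom A$ I set $u := (A+y)^0 z \in \tilde{A} z$ and test monotonicity of $\tilde A$ at the pair $(z,u)$ against $(x_{z,y}(\gamma),s_\gamma)$ to obtain the uniform bound $\varphi_{z,y}(\gamma) \le \|u\|$; since $x_{z,y}(\gamma) \to z$, I pass to a weak limit of the bounded net $(s_\gamma)$ and use the strong--weak sequential closedness of $\gra \tilde A$ to identify this limit as $(A+y)^0 z$, after which weak lower semicontinuity of the norm combined with the uniform bound yields $\varphi_{z,y}(\gamma) \to \|(A+y)^0 z\|$.

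\emph{Part (3).} The inclusion $\dom A \subset \mathcal{C}$ is immediate from (2). For $\mathcal{C} \subset \overline{\dom} A$ I argue by contraposition: if $z \notin \overline{\dom} A$, then $d := \operatorname{dist}(z, \overline{\dom} A) > 0$, and since $x_{z,y}(\gamma) = J_{\gamma \tilde A}\,z \in \dom \tilde A = \dom A$ we get $\|z - x_{z,y}(\gamma)\| \ge d$, hence $\varphi_{z,y}(\gamma) \ge d/\gamma \to +\infty$ and $z \notin \mathcal{C}$.

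\emph{Part (4).} The key additional ingredient is that $x_{z,y}(\gamma) = J_{\gamma \tilde A}\,z \to P_{\overline{\dom} A}\,z =: p$ as $\gamma \downarrow 0$ (convergence of the resolvents to the projection onto the closure of the domain, \cite{bauschkebook2017}). Dividing the target inequality by $\gamma > 0$, it is equivalent to $\|B_3 z - B_3 x_{z,y}(\gamma)\| \le \theta\,\varphi_{z,y}(\gamma)$, and I split according to the behaviour of $\varphi_{z,y}$ near $0$. In case (a), $z \in \mathcal{C} \subset \overline{\dom} A$ yields $p = z$ and a finite limit $\ell := \lim_{\gamma \downarrow 0}\varphi_{z,y}(\gamma)$; if $\ell = 0$ then $\varphi_{z,y} \equiv 0$, so $x_{z,y}(\gamma) = z$ and the inequality is trivial, whereas if $\ell > 0$ the continuity of $B_3$ at $z$ gives $\|B_3 z - B_3 x_{z,y}(\gamma)\| \to 0 < \theta \ell$, so the inequality holds for all small $\gamma$. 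In cases (b) and (c), $z \notin \mathcal{C}$ forces $\varphi_{z,y}(\gamma) \to +\infty$, so it suffices to keep the left-hand side bounded: since $x_{z,y}(\gamma) \to p$ and $B_3$ is locally bounded at $p$ --- by hypothesis in (b), and because the maximally monotone $B_3$ is locally bounded at the interior point $p \in \overline{\dom} A \subset \inte \dom B_3$ in (c) \cite{bauschkebook2017} --- the values $B_3 x_{z,y}(\gamma)$ are eventually bounded, and the inequality follows once $\theta\,\varphi_{z,y}(\gamma)$ exceeds $\|B_3 z\|$ plus that bound.

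\emph{Main obstacle.} The delicate point is the boundary behaviour in (4): the limit $p = P_{\overline{\dom} A}\,z$ may lie in $\overline{\dom} A \setminus \dom A$, where $B_3$ need not be continuous, so one cannot simply pass to the limit by continuity. Hypotheses (b) and (c) are designed precisely to substitute local boundedness of $B_3$ near $p$ for continuity, and the crux is to verify that $x_{z,y}(\gamma)$ enters a ball on which $B_3$ is bounded while $\varphi_{z,y}(\gamma)$ simultaneously diverges, so that the bounded left-hand side is eventually dominated by $\theta\,\varphi_{z,y}(\gamma)$.
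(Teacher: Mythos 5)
Your proof is correct, and its skeleton matches the paper's: the same monotonicity inequalities give (1) and the uniform bound $\varphi_{z,y}\le\|(A+y)^0z\|$ in (2), the same weak-limit/minimal-norm argument identifies the limit in (2), and (4) is handled by the same trichotomy (continuity of $B_3$ when the limit of $\varphi_{z,y}$ is finite; local boundedness of $B_3$ plus divergence of $\varphi_{z,y}$ in cases (b)--(c), with (c) reduced to (b) via local boundedness of a maximally monotone operator at interior points of its domain). The genuine difference is your opening reformulation $x_{z,y}(\gamma)=J_{\gamma(A+y)}z$ (a valid identity, since $u=J_{\gamma A}(z-\gamma y)\Leftrightarrow z-\gamma y-u\in\gamma Au\Leftrightarrow z-u\in\gamma(A+y)u$), which views $\varphi_{z,y}(\gamma)$ as the norm of the Yosida approximation of $A+y$ at $z$, and it buys two real simplifications. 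In (3), since $x_{z,y}(\gamma)\in\dom A$, you get $\|z-x_{z,y}(\gamma)\|\ge \operatorname{dist}(z,\overline{\dom}A)>0$ immediately, whereas the paper shows $x_{z,y}(\gamma)\to P_{\overline{\dom}A}z$ via nonexpansiveness together with \cite[Theorem~23.48]{bauschkebook2017} and then uses continuity of the norm. More substantially, in (4)(b) you obtain $x_{z,y}(\gamma)\to P_{\overline{\dom}A}z$ by applying \cite[Theorem~23.48]{bauschkebook2017} directly to the maximally monotone operator $A+y$, whose domain closure is $\overline{\dom}A$; the paper instead uses the hypothesis $y=(B_1+B_2+B_3)z$ to derive the bound $\|x_{z,y}(\gamma)-J_{\gamma A}z\|\le\gamma\|(B_1+B_2+B_3)z\|$ by a separate monotonicity computation, and only then invokes $J_{\gamma A}z\to P_{\overline{\dom}A}z$. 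Your route therefore never uses $y=(B_1+B_2+B_3)z$, so it proves (4)(b)--(c) for arbitrary $y\in\H$, a slight strengthening of the lemma. One caveat, which you share with the paper rather than introduce: in case (4)(a) both arguments invoke continuity (and single-valuedness) of $B_3$ at $z$, while Problem~\ref{prob:problem1} only guarantees this on $\dom A\cup X$, and a point of $\mathcal{C}\subset\overline{\dom}A$ need not lie in that set; this is harmless where the lemma is applied in the proof of Theorem~\ref{teo:convergencia}, but it is an implicit assumption in both proofs.
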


\begin{proof}
	
	Let $z \in \H$. Note that, if $z \in \zer ( A + y )$,
	
	\begin{align}
		(\forall \gamma > 0 )\quad 0 \in Az+y \quad &\Leftrightarrow  
		\quad  z-\gamma y \in \gamma A z + z 
		\nonumber \\ 
		&\Leftrightarrow  \quad  z = x_{z,y} (\gamma)\nonumber \\ 
		&\Leftrightarrow  \quad  \varphi_{z,y}(\gamma) = 0 
		\label{varphiz=0}.   
	\end{align} 
	In this case, \ref{prop:varphiz1}, \ref{prop:varphiz2}, and 
	\ref{prop:varphiz3} 
	are clear. Henceforth, assume $z \in \H \setminus \zer ( A + y 
	)$. 
	It follows from 
	\eqref{eq:varphiz} 
	that
	\begin{equation}\label{eq:contencionxzy}
		\frac{z-x_{z,y}(\gamma)}{\gamma}  - y \in A 
		{x_{z,y}(\gamma)}. 
	\end{equation}
	\ref{prop:varphiz1}: For every $\gamma_1$ 
	and $\gamma_2$ in $\RPP$, \eqref{eq:contencionxzy}  
	and the 
	monotonicity of $A$ yield
	\begin{align*}
		0 &\leq \Scal{\frac{z-x_{z,y}(\gamma_1)}{\gamma_1}  
			-\frac{z-x_{z,y}(\gamma_2)}{\gamma_2} 
		}{x_{z,y}(\gamma_1)-x_{z,y}(\gamma_2)}\\
		& = \Scal{\frac{z-x_{z,y}(\gamma_1)}{\gamma_1}  
			-\frac{z-x_{z,y}(\gamma_2)}{\gamma_2} 
		}{x_{z,y}(\gamma_1)-z-(x_{z,y}(\gamma_2)-z)}\\
		& = - \frac{1}{\gamma_1} \|z-x_{z,y}(\gamma_1)\|^2 + \left( 
		\frac{1}{\gamma_1}+\frac{1}{\gamma_2} \right) 
		\scal{z-x_{z,y}(\gamma_1)}{z-x_{z,y}(\gamma_2)}- 
		\frac{1}{\gamma_2} \|z-x_{z,y}(\gamma_2)\|^2.
	\end{align*} 
	Hence, we obtain
	
	\begin{align*}
		\gamma_1 \varphi_{z,y}(\gamma_1)^2+\gamma_2 
		\varphi_{z,y}(\gamma_2)^2 &\leq ( \gamma_1+\gamma_2) \Scal{ 
			\frac{z-x_{z,y}(\gamma_1)}{\gamma_1}}{\frac{z-x_{z,y}(\gamma_2)}{\gamma_2}}\\
		&\leq \frac{( \gamma_1+\gamma_2)}{2} 
		(\varphi_{z,y}(\gamma_1)^2+\varphi_{z,y}(\gamma_2)^2),
	\end{align*}
	which yields 
	$(\gamma_1-\gamma_2)(\varphi_{z,y}(\gamma_1)^2-\varphi_{z,y}(\gamma_2)^2)\leq
	0$ and \ref{prop:varphiz1} follows.
	
	\ref{prop:varphiz2}: It 
	follows 
	from the monotonicity of $A$ and \eqref{eq:contencionxzy} that, 
	for 
	every $w \in  Az +y$ and $\gamma \in \RPP$,
	
	\begin{align*}
		0 \leq 
		\Scal{\frac{z-x_{z,y}(\gamma)}{\gamma}-w}{x_{z,y}(\gamma)-z},
	\end{align*}
	which yields 
	\begin{align}\nonumber
		\frac{1}{\gamma}\|z-x_{z,y}(\gamma)\|^2 
		&\leq\scal{w}{z-x_{z,y}(\gamma)}\\
		&\leq \|{w}\| 
		\|z-x_{z,y}(\gamma)\|.\label{eq:desinf}
	\end{align}
	Thus, $\varphi_{z,y} (\gamma) \leq \|w\|$. Therefore, since 
	\cite[Proposition 20.36]{bauschkebook2017} implies that, for 
	every $z 
	\in \dom A$, $Az+y$ is nonempty, closed, and convex, 
	\cite[Theorem 
	11.10]{bauschkebook2017} yields
	\begin{align}
		(\forall \gamma \in \RPP)\quad \quad	
		\varphi_{z,y}(\gamma)\leq  
		\min_{
			w \in Az+y} \|{w}\|. \label{eq:desmin}
	\end{align}
	Hence, since $\varphi_{z,y} \geq 0$, \ref{prop:varphiz1} implies 
	that 
	$\lim_{\gamma \downarrow 0} 
	\varphi_{z,y}(\gamma)$ 
	exists. In turn, since $z \in \H\setminus \zer  
	(A+y)$, it follows from \eqref{eq:desmin} and 
	\eqref{varphiz=0} that
	\begin{equation} \label{eq:xtoz}
		0<\varphi_{z,y}(1) \leq \lim_{ \gamma \downarrow 0} 
		\varphi_{z,y}(\gamma)\leq  \min_{
			w \in Az+y} \|{w}\|, 
	\end{equation}
	which, in view of \eqref{prop:varphiz}, implies 
	\begin{equation}\label{eq:xzytoz}
		\lim_{\gamma \downarrow 0} x_{z,y}(\gamma)=z.
	\end{equation}
	Since $(\varphi_{z,y}(\gamma))_{(\gamma >0)}$ is bounded, by 
	\cite[Lemma 2.45]{bauschkebook2017}, there exists a sequence 
	$(\gamma_k)_{k \in \N} \subset \RPP$ and  $\overline{w} \in \H$  
	such that $\gamma_k  \downarrow 0$ and $\frac{z- 
		x_{z,y}(\gamma_k)}{\gamma_k} \weak \overline{w}$ as $k \to 
	+\infty$.  
	Therefore, \eqref{eq:contencionxzy}, \eqref{eq:xzytoz}, and 
	\cite[Proposition~20.38(i)]{bauschkebook2017} imply
	$\overline{w} 
	\in 
	A z +y $. Hence, noting that
	\begin{align} \nonumber
		(\varphi_{z,y}(\gamma_k))^2&= \left\|\frac{z- 
			x_{z,y}(\gamma_k)}{\gamma_k} - 
		\overline{w}\right\|^2+\|\overline{w}\|^2+2\Scal{\frac{z- 
				x_{z,y}(\gamma_k)}{\gamma_k}-\overline{w}}{\overline{w}}
		\\
		&\geq \|\overline{w}\|^2+2\Scal{\frac{z- 
				x_{z,y}(\gamma_k)}{\gamma_k}-\overline{w}}{\overline{w}},
		\label{eq:desparalimite}
	\end{align}
	we deduce
	$$
	\lim_{\gamma \downarrow 0}(\varphi_{z,y}(\gamma))^2 = 
	\lim_{k 
		\to 
		\pinf}(\varphi_{z,y}(\gamma_k))^2 \geq \|\overline{w}\|^2 
	\geq \min_{w \in Az+y} \|w\|^2.$$ Therefore, we obtain from 
	\eqref{eq:xtoz} that
	\begin{equation}
		\lim_{ \gamma \downarrow 0} \varphi_{z,y}(\gamma) =  \min_{w 
			\in 
			Az+y}\|w\|,
	\end{equation}
	and \ref{prop:varphiz2} follows.
	
	\ref{prop:varphiz23}: It follows from \eqref{eq:defsetC} and  
	\ref{prop:varphiz2} that $\dom A \subset\mathcal{C}$ . Let $z 
	\in \H \setminus \overline{\dom} A$ and let $\gamma >0$. The 
	firm nonexpansiveness of $J_{\gamma A}$ 
	\cite[Proposition~23.8(ii)]{bauschkebook2017} implies
	\begin{align}\label{eq:toproj} \nonumber
		\|x_{z,y}(\gamma)-P_{\overline{\dom} A} z \| &= \| J_{\gamma 
			A}(z-\gamma y) - J_{\gamma A}z + J_{\gamma A}z 
		-P_{\overline{\dom} 
			A} z\|\\ \nonumber
		& \leq \| J_{\gamma A}(z-\gamma y) - J_{\gamma A}z\| + 
		\|J_{\gamma A}z -P_{\overline{\dom} A} z\|\\
		& \leq \gamma \|y\| + \|J_{\gamma A}z 
		-P_{\overline{\dom} A} 
		z\|.
	\end{align}	
	Hence, by taking $\gamma 
	\downarrow 0$ in \eqref{eq:toproj} we conclude from  
	\cite[Theorem~23.48]{bauschkebook2017} that $ 
	x_{z,y}(\gamma) \to P_{\overline{\dom} A} z$ as $\gamma 
	\downarrow 
	0$. Then, by the continuity of the norm and $z \notin 
	\overline{\dom} A$, we deduce
	\begin{equation*}
		\lim_{\gamma \downarrow 0} \| z -x_{z,y}(\gamma) \| = \| z 
		-P_{\overline{\dom} A} z\|>  0.
	\end{equation*}
	Therefore, 
	$\varphi_{z,y}(\gamma)=\| z - x_{z,y}(\gamma) \| / \gamma \to + 
	\infty$ 
	as $\gamma 
	\downarrow 0$ and, hence, it follows from \eqref{eq:defsetC}
	that $z \in \H\setminus \mathcal{C}$. 
	
	\ref{prop:varphiz31}: If $z \in  \mathcal{C}$, it 
	follows from \ref{prop:varphiz1} that
	\begin{equation} \label{eq:xtoz2}
		0<\varphi_{z,y}(1) \leq \lim_{ \gamma \downarrow 0} 
		\varphi_{z,y}(\gamma)< +\infty. 
	\end{equation}
	Therefore 
	$\lim_{\gamma \downarrow 0} 
	x_{z,y}=z$ and the continuity of $B_3$ implies
	
	\begin{equation} \label{eq:b3to0}
		\lim_{\gamma  \downarrow 0}   B_3 x_{z,y} = B_3 z.
	\end{equation} 
	The result follows from \eqref{eq:xtoz2} and \eqref{eq:b3to0}.
	
	\ref{prop:varphiz32}: Set $B=B_1+B_2+B_3$ and let 
	$p=P_{\overline{\dom} A} z $. Since $B_3$ is locally bounded at 
	$p$, 
	there exists 
	$\delta_p \in \RPP$ such that $B_3(\mathcal{B}(p;\delta_p))$ is 
	bounded. Now since $y=Bz$ and
	\begin{equation}\label{eq:contencionJz}
		\frac{z-J_{\gamma A} z }{\gamma} \in A J_{\gamma A} z,
	\end{equation} it follows from \eqref{eq:contencionxzy} and 
	the 
	monotonicity of $A$ that
	\begin{align*} \nonumber
		0 \leq &  
		\Scal{\frac{z-x_{z,y}(\gamma)}{\gamma}-Bz-\frac{z-J_{\gamma 
					A}z}{\gamma}}{x_{z,y}(\gamma)-J_{\gamma A}z} \\ 
		=&  - \frac{1}{\gamma} \|x_{z,y}(\gamma)-J_{\gamma A} z\|^2 + 
		\scal{Bz}{J_{\gamma A}{z}-x_{z,y}(\gamma)}\\
		\leq& - \frac{1}{\gamma} \|x_{z,y}(\gamma)-J_{\gamma A} z\|^2 
		+ 
		\|Bz\|\|J_{\gamma A}{z} -x_{z,y}(\gamma)\|.
	\end{align*}
	Hence, we obtain 
	\begin{equation} \label{eq:cotaBz}
		\|x_{z,y}(\gamma)-J_{\gamma A}z\|\leq	\gamma \|Bz\|  .
	\end{equation}
	Additionally, by \cite[Theorem 23.48]{bauschkebook2017}, there 
	exists 
	$\gamma_1$ such that, for every $\gamma < \gamma_1$, $\| 
	J_{\gamma A} z-p\| \leq \delta_p/2$. By defining
	\begin{equation}
		\overline{\gamma}:=\begin{cases}
			\gamma_1, &\text{ if } Bz=0;\\
			\min \{ \delta_p/(2\|Bz\|),\gamma_1\}, &\text{ if } 
			Bz\neq 0,
		\end{cases}
	\end{equation}
	it follows from 
	\eqref{eq:cotaBz} that, for every $\gamma < 
	\overline{\gamma}$,
	\begin{align*}
		\|x_{z,y}(\gamma) - p \| &\leq \| x_{z,y}(\gamma) -J_{\gamma 
			A} z\| 
		+ \|J_{\gamma A} z - p \| \\
		& \leq \gamma \|Bz\| + \frac{\delta_p}{2}\\
		&< \delta_p,
	\end{align*}
	which yields $(x_{z,y}(\gamma))_{0<\gamma\leq 
		\overline{\gamma}} 
	\subset{\mathcal{B}(p,\delta_p)}$ and, thus, $(\|B_3 
	z-B_3x_{z,y}(\gamma)\| 
	)_{0<\gamma\leq \overline{\gamma}}$ is bounded. Therefore, 
	since $z\in \H 	\setminus 	\mathcal{C}$ implies 
	$\varphi_{z,\gamma} (\gamma) \to + \infty$ as $\gamma 
	\downarrow 0$,  the result follows. 
	
	\ref{prop:varphiz33}: Since $p=P_{\overline{\dom} A} z \notin 
	\bdry \dom B_3$,  $B_3$ is 	locally bounded at $p$ 
	\cite[Theorem~21.18]{bauschkebook2017} and 
	the result follows from 
	\ref{prop:varphiz32}.\qed
\end{proof}

\begin{remark} \begin{enumerate}
		\item In the case $B_2=0$, by setting $y=(B_1+B_3)z$ in 
		Lemma~\ref{prop:varphiz}\eqref{prop:varphiz1}\&\eqref{prop:varphiz2},
		we recover \cite[Lemma 2.2(1)]{BricenoDavis2018}.
		\item Realizing that \cite[Lemma 2.2(2)]{BricenoDavis2018} is 
		valid for every $z \in \dom A$, it is a particular case of 
		Lemma~\ref{prop:varphiz}\eqref{prop:varphiz23}\&\eqref{prop:varphiz31}.
	\end{enumerate}
\end{remark}
Now we state our main result.
\begin{teo}\label{teo:convergencia}
	In the context of Problem~\ref{prob:problem1},
	suppose that one of the following holds:
	\begin{enumerate}
		\item \label{teo:hyp1} $X \subset \dom A$.
		
		\item \label{teo:hyp2}  $\overline{\dom} A \subset \dom B_3$ 
		and 
		$B_3$ is locally bounded in $\dom B_3$.
		
		\item \label{teo:hyp3}  $\overline{\dom} A \subset \inte \dom 
		B_3$.
		
	\end{enumerate} Let $ \varepsilon 
	\in \left]0, 1\right[$, set $\rho= \min\{2 \beta \varepsilon, 
	\sqrt{1-\varepsilon}/L\}$, let $\sigma \in \left]0,1\right[$, let 
	$\theta 
	\in \left]0,\sqrt{1-\varepsilon} -L\rho\sigma\right[$, let $z_0 
	\in \dom B_3$, and 
	consider the sequence $(z_n)_{n\in \N}$ defined by the 
	recurrence
	\begin{equation}\label{eq:algorithm}
		(\forall n\in\N)\quad 
		\begin{array}{l}
			\left\lfloor
			\begin{array}{l}
				x_n= J_{\gamma_n A}\big(z_n-\gamma_n 
				(B_1+B_2+B_3)z_n\big)\\
				z_{n+1}= P_X\big(x_n + \gamma_n 
				(B_2+B_3)z_n-\gamma_n 
				(B_2+B_3)x_n\big),
			\end{array}
			\right.
		\end{array}
	\end{equation}
	where, for every $n \in \N$, $\gamma_n$ is the 
	largest $\gamma \in \{ \rho \sigma,  \rho \sigma^2,  
	\rho \sigma^3, \cdots \}$ satisfying
	\begin{equation}\label{eq:thetateo}
		\gamma \big\|B_3z_n-B_3 J_{\gamma A}\big(z_n - \gamma 
		(B_1+B_2+B_3)z_n\big)\big\| 
		\leq \theta\big\|z_n-J_{\gamma A} \big(z_n - \gamma 
		(B_1+B_2+B_3)z_n\big)\big\|.
	\end{equation} 
	Moreover, assume that at least one of the following additional 
	statements hold:
	\begin{enumerate}[label=(\roman*)]
		\item \label{teo:hypa} $\displaystyle \liminf_{n \to \infty} 
		\gamma_n = 
		\delta >0$.
		\item \label{teo:hypb} $B_3$ is uniformly continuous in any 
		weakly 
		compact subset of $\overline{\conv} (\dom A \cup X)$.
	\end{enumerate}
	Then, $(z_n)_{n \in \N}$ converges weakly to a solution to
	Problem~\ref{prob:problem1}.
	
\end{teo}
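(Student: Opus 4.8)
The plan is to prove a Fejér-type monotonicity of $(z_n)$ with respect to the (nonempty) solution set, to extract from it a vanishing residual, and to close with the demiclosedness of the sum operator and Opial's lemma. Before any of this, I would check that the recursion is well defined, i.e. that for each $n$ the backtracking \eqref{eq:thetateo} stops after finitely many trials. This is exactly Lemma~\ref{prop:varphiz}\ref{prop:varphiz3} applied with $z=z_n\in\dom B_3$ and $y=(B_1+B_2+B_3)z_n$: under hypothesis~\ref{teo:hyp1} one has $z_n\in X\subset\dom A\subset\mathcal C$ for $n\ge 1$ and invokes \ref{prop:varphiz31}, whereas hypotheses~\ref{teo:hyp2} and~\ref{teo:hyp3} trigger \ref{prop:varphiz32} and \ref{prop:varphiz33} (covering any $z_n\notin\mathcal C$); in every case \eqref{eq:destheta} gives \eqref{eq:thetateo} for all small $\gamma$, so $\gamma_n$ exists.

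For the master estimate, let $x^*$ solve Problem~\ref{prob:problem1}, so $x^*\in X$ and $-(B_1+B_2+B_3)x^*\in Ax^*$; write $r_n=(B_2+B_3)z_n-(B_2+B_3)x_n$ and $y_n=x_n+\gamma_n r_n$, so that $z_{n+1}=P_X y_n$. From \eqref{eq:algorithm} one has $\gamma_n^{-1}(z_n-x_n)-(B_1+B_2+B_3)z_n\in Ax_n$, and monotonicity of $A$ against $(x^*,-(B_1+B_2+B_3)x^*)$ lower-bounds $\langle x_n-x^*,z_n-x_n\rangle$. Expanding $\|y_n-x^*\|^2=\|x_n-x^*\|^2+2\gamma_n\langle x_n-x^*,r_n\rangle+\gamma_n^2\|r_n\|^2$ and substituting $\|x_n-x^*\|^2=\|z_n-x^*\|^2-\|z_n-x_n\|^2-2\langle z_n-x_n,x_n-x^*\rangle$, the monotonicity of $B_2+B_3$ annihilates the Lipschitzian part, and the surviving $B_1$ cross-term $-2\gamma_n\langle x_n-x^*,B_1z_n-B_1x^*\rangle$ is controlled by splitting $x_n-x^*=(x_n-z_n)+(z_n-x^*)$, using $\beta$-cocoercivity and Young's inequality with weight $2\beta$, so that the two $\|B_1z_n-B_1x^*\|^2$ terms cancel and only $\tfrac{\gamma_n}{2\beta}\|z_n-x_n\|^2$ remains. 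As $\gamma_n\le\rho\le 2\beta\varepsilon$, this gives $\|y_n-x^*\|^2\le\|z_n-x^*\|^2-(1-\varepsilon)\|z_n-x_n\|^2+\gamma_n^2\|r_n\|^2$. The $L$-Lipschitzianity of $B_2$ with \eqref{eq:thetateo} yields $\gamma_n\|r_n\|\le(L\gamma_n+\theta)\|z_n-x_n\|\le(L\rho\sigma+\theta)\|z_n-x_n\|$, and since $\theta<\sqrt{1-\varepsilon}-L\rho\sigma$ the constant $\kappa:=(1-\varepsilon)-(L\rho\sigma+\theta)^2$ is strictly positive. Combining with $\|z_{n+1}-x^*\|^2\le\|y_n-x^*\|^2-\|y_n-z_{n+1}\|^2$ (projection onto $X\ni x^*$) gives
\[
\|z_{n+1}-x^*\|^2\le\|z_n-x^*\|^2-\kappa\|z_n-x_n\|^2-\|y_n-z_{n+1}\|^2.
\]

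This inequality makes $(z_n)$ Fejér monotone with respect to the solution set, hence bounded with $(\|z_n-x^*\|)$ convergent, and summing it forces $\|z_n-x_n\|\to0$ and $\|y_n-z_{n+1}\|\to0$; since $\gamma_n\|r_n\|\le\sqrt{1-\varepsilon}\,\|z_n-x_n\|\to0$ one also gets $\|z_{n+1}-z_n\|\to0$. Now let $z_{n_k}\weak z$; then $x_{n_k}\weak z$, and $z\in X$ because $z_{n_k}\in X$ for $n_k\ge1$ and $X$ is weakly closed. The operator $T:=A+B_1+B_2+B_3$ is maximally monotone, being the sum of the (assumed) maximally monotone $A+B_3$ and the monotone, Lipschitz-continuous, full-domain operator $B_1+B_2$, and $w_n:=\gamma_n^{-1}(z_n-x_n)+(B_1+B_2+B_3)x_n-(B_1+B_2+B_3)z_n\in Tx_n$. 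I would show $w_{n_k}\to0$ along a subsequence and invoke weak--strong closedness of $\gra T$ to obtain $0\in Tz$, so that every weak cluster point solves the problem; Opial's lemma (e.g. \cite[Theorem~5.5]{bauschkebook2017}) then upgrades Fejér monotonicity to weak convergence of the whole sequence to a single solution.

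The hard part is establishing $w_{n_k}\to0$ when the step-sizes degenerate. Under hypothesis~\ref{teo:hypa}, $\liminf\gamma_n=\delta>0$ forces $\gamma_n\ge\delta/2$ eventually, so both $\gamma_n^{-1}(z_n-x_n)$ and $r_n$ vanish and, with $\|B_1x_n-B_1z_n\|\le\beta^{-1}\|z_n-x_n\|\to0$, indeed $w_n\to0$. Under hypothesis~\ref{teo:hypb} one may have $\gamma_{n_k}\to0$, and the remedy is to work with the \emph{rejected} trial step $\hat\gamma_{n_k}=\gamma_{n_k}/\sigma$ and the point $\hat x_{n_k}=J_{\hat\gamma_{n_k}A}(z_{n_k}-\hat\gamma_{n_k}(B_1+B_2+B_3)z_{n_k})$, which violate \eqref{eq:thetateo}. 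Monotonicity of $\varphi_{z_{n_k},y_{n_k}}$ (Lemma~\ref{prop:varphiz}\ref{prop:varphiz1}) gives $\|z_{n_k}-\hat x_{n_k}\|\le\sigma^{-1}\|z_{n_k}-x_{n_k}\|\to0$, hence $\hat x_{n_k}\weak z$; as $\{z_{n_k},\hat x_{n_k}\}\cup\{z\}$ sits in a weakly compact subset of $\overline{\conv}(\dom A\cup X)$, uniform continuity of $B_3$ forces $\|B_3z_{n_k}-B_3\hat x_{n_k}\|\to0$. Feeding this into the violated inequality $\theta\|z_{n_k}-\hat x_{n_k}\|<\hat\gamma_{n_k}\|B_3z_{n_k}-B_3\hat x_{n_k}\|$ shows $\hat\gamma_{n_k}^{-1}\|z_{n_k}-\hat x_{n_k}\|\to0$, so the analogue $\hat w_{n_k}:=\hat\gamma_{n_k}^{-1}(z_{n_k}-\hat x_{n_k})+(B_1+B_2+B_3)\hat x_{n_k}-(B_1+B_2+B_3)z_{n_k}\in T\hat x_{n_k}$ tends to $0$ with $\hat x_{n_k}\weak z$, again yielding $0\in Tz$ by closedness of $\gra T$. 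Thus replacing $(x_{n_k},\gamma_{n_k})$ by the rejected pair $(\hat x_{n_k},\hat\gamma_{n_k})$ is the device that turns the failure of the line search into the desired vanishing residual.
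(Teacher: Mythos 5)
Your proposal is correct and follows essentially the same route as the paper's proof: well-definedness of the backtracking via Lemma~\ref{prop:varphiz}\eqref{prop:varphiz3}, a quasi-Fej\'er estimate combined with \eqref{eq:thetateo} and the bound $L\rho\sigma+\theta<\sqrt{1-\varepsilon}$ to get $z_n-x_n\to 0$, the residual $w_n\in(A+B_1+B_2+B_3)x_n$ with demiclosedness under hypothesis~\ref{teo:hypa}, the rejected trial step $\hat\gamma_{n}=\gamma_{n}/\sigma$ together with Lemma~\ref{prop:varphiz}\eqref{prop:varphiz1} and uniform continuity of $B_3$ under hypothesis~\ref{teo:hypb}, and a Fej\'er/Opial argument to conclude. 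The only immaterial deviations are that you derive in-line (correctly) the master inequality that the paper imports from \cite[Proposition~2.1]{BricenoDavis2018}, and that you take the weakly compact set to be $\{z\}\cup\{z_{n_k},\hat x_{n_k}\}_{k}$ rather than the paper's union of segments justified by \cite[Lemma~3.2]{salzo2017}; both choices work.
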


\begin{proof}
	
	Set $B=B_1+B_2+B_3$ and fix $n \in \N$. 
	If $z_n \in \mathcal{C}$, where $\mathcal{C}$ is defined in 
	\eqref{eq:defsetC}, then $\gamma_n$ is well defined in view of 
	Lemma~\ref{prop:varphiz}\eqref{prop:varphiz31}. In particular, if 
	\ref{teo:hyp1} holds, $\gamma_n$ is well defined in view of 
	Lemma~\ref{prop:varphiz}\eqref{prop:varphiz23}. Now suppose 
	that 
	$z_n 
	\in \H \setminus \mathcal{C}$. If $n=0$, it is clear that 
	$z_0 
	\in \dom B_3 \setminus \mathcal{C}$. Otherwise, since $X 
	\subset 
	\dom B_3$, we have 
	$z_n 
	\in \dom B_3 \setminus \mathcal{C}$. Now, if we assume 
	\ref{teo:hyp2}, then $B_3$ is locally bounded in 
	$P_{\overline{\dom} 
		A}z_n\in\overline{\dom} A$ and $\gamma_n$ is well defined 
	from 
	Lemma~\ref{prop:varphiz}\eqref{prop:varphiz32}. Similarly, if we 
	assume \ref{teo:hyp3}, $\gamma_n$ is well defined from 
	Lemma~\ref{prop:varphiz}\eqref{prop:varphiz33}.
	
	Now, let $z^* \in \zer(A+B)\cap X$. Note that the maximal 
	monotonicity of 
	$A+B_3$, the full domain of $B_1$ and $B_2$, and \cite[Corollary 
	25.5(i)]{bauschkebook2017} imply that $A+B_2+B_3$ and 
	$A+B$ 
	are 
	maximally 
	monotone. Then, since 
	$B_2+B_3$ is continuous and single valued in $\dom 
	(B_2+B_3)=\dom 
	B_3 \supset \dom A \cup X$ and $B_1$ is $\beta$-cocoercive, it 
	follows from \cite[Proposition 
	2.1(1)\&(2)]{BricenoDavis2018} that, for every $n \in \N$, 
	we have
	\begin{align}\label{eq:LemmaDavis}\nonumber
		\|z_{n+1}- z^*\|^2 \leq & \|z_{n}- z^*\|^2-(1-\varepsilon) 
		\|z_n-x_n\|^2+\gamma_n^2\|(B_2+B_3)z_n-(B_2+B_3)x_n\|^2\\
		& \hspace*{5cm} -\frac{\gamma_n}{\varepsilon}(2\beta 
		\varepsilon 
		- \gamma_n)\|B_1z_n-B_1z^*\|^2.
	\end{align}
	Note that the Lipschitz property of $B_2$ and 
	\eqref{eq:thetateo} yield
	\begin{align}\label{eq:LemmaDavis2}
		\gamma_n^2\|(B_2+B_3)z_n-(B_2+B_3)x_n\|^2  & \leq 
		(L\gamma_n\|z_n-x_n\|+\gamma_n\|B_3z_n-B_3x_n\|)^2  
		\nonumber \\
		& \leq (L\gamma_n+\theta)^2\|z_n-x_n\|^2  \nonumber\\
		& \leq  (L\rho \sigma +\theta)^2\|z_n-x_n\|^2.
	\end{align}
	Hence, it follows from \eqref{eq:LemmaDavis} and 
	\eqref{eq:LemmaDavis2} that
	\begin{align*}
		(\forall n \in \N)\quad	\|z_{n+1}- z^*\|^2
		\leq & \|z_n-z^*\|^2-\big((1-\varepsilon)-(L\rho \sigma
		+\theta)^2\big)\|z_n-x_n\|^2.
	\end{align*}
	Therefore, since $(1-\varepsilon-(L \rho \sigma +\theta)^2)>0$, 
	\cite[Lemma~3.1(i)]{Comb2001} implies that 
	$(\|z_n-z^*\|)_{n \in \N}$ is a convergent sequence and 
	\begin{equation}
		\label{e:zxto0}
		z_n-x_n \to 0.
	\end{equation} 
	Let $z \in \H$ be a weak limit point of the 
	subsequence 
	$(z_n)_{n\in K}$ for some $K \subset \N$. Then $z$ is also a 
	weak 
	limit point of 
	$(x_n)_{n\in K}$ in view of \eqref{e:zxto0}. Since $X$ is closed 
	and convex, 
	and 
	$(z_n)_{n\in K}$ is a sequence in $X$, we conclude that $z 
	\in X$. Let us prove that $z \in \zer (A+B)$.
	
	\ref{teo:hypa}: Assume that $\liminf_{n\to 
		+\infty}\gamma_n=\delta 
	>0$. 
	Then, there exists $n_0 \in \N$ such that $\inf_{n \geq n_0} 
	\gamma_n 
	\geq \delta$. Hence, \eqref{eq:thetateo}, \eqref{eq:algorithm}, 
	the 
	Lipschitz 
	continuity of $B_2$, and the 
	cocoercivity of $B_1$ yield
	
	\begin{align}\nonumber
		(\forall n \geq  n_0)	\quad \|B z_n- B x_n\| &\leq \|B_1 
		z_n - B_1 
		x_n\| + 
		\|B_2 z_n - B_2 x_n\| 
		+ \|B_3 z_n - B_3x_n\|\\\label{eq:con_z-x}
		&\leq \left(\frac{1}{\beta}+L+\frac{\theta}{\delta}\right) 
		\|z_n-x_n\|,
	\end{align} 
	which implies $B z_n -B x_n \to 0$ in view of \eqref{e:zxto0}. 
	Hence, 
	it follows from  
	\eqref{eq:algorithm} that
	\begin{equation} \label{eq:contencion1}
		(\forall n \in \N) \quad 
		u_n:=\frac{z_{n}-x_n}{\gamma_n}-Bz_n+Bx_n 
		\in 
		(A+B)x_n,
	\end{equation}
	and \eqref{e:zxto0} and $\liminf_{n\to +\infty}\gamma_n=\delta 
	>0$ 
	imply that $u_n \to 0$. 
	Therefore, since $x_n \weak z$, $n \in K$, 
	the weak-strong closure of the graph of the maximally monotone 
	operator $A+B$ and 
	\eqref{eq:contencion1} yield $z \in \zer(A+B)$. The 
	convergence follows from 
	\cite[Lemma~2.47]{bauschkebook2017}.

	\ref{teo:hypb}: Without loss of generality, suppose
	that 
	$\lim_{n \to \infty, n \in K } \gamma_n=0$. Our choice of 
	$\gamma_n$ 
	guarantee that, for every 
	$n 
	\in 
	K$, we have
	\begin{equation} \label{eq:destheta3}
		\widetilde{\gamma}_n \|B_3z_n-B_3 J_{\widetilde{\gamma}_n A} 
		(z_n 
		- 
		\widetilde{\gamma}_n Bz_n)\| >  
		\theta\|z_n-J_{\widetilde{\gamma}_n A} (z_n 
		- \widetilde{\gamma}_n Bz_n)\|,
	\end{equation}
	where 
	$\widetilde{\gamma}_n:=\frac{\gamma_n}{\s}>\gamma_n$. 
	Now, by the nonincreasing property of $\gamma \mapsto 
	\frac{1}{\gamma}\|z-J_{\gamma A} (z - \gamma Bz)\|$ provided 
	by 
	Lemma~\ref{prop:varphiz}\eqref{prop:varphiz1} with $y = 
	B 
	z$, we have
	\begin{align}\label{eq:nonincreasinggamma}
		\frac{1}{\widetilde{\gamma}_n}
		\|z_n-J_{\widetilde{\gamma}_n A} 
		(z_n - 	\widetilde{\gamma}_n Bz_n)\| \leq 
		\frac{1}{\gamma_n}\|z_n-J_{\gamma_n A} (z_n - \gamma_n 
		Bz_n)\|, 
	\end{align}
	which is equivalent to 
	\begin{equation}
		\|z_n-J_{\widetilde{\gamma}_n A} (z_n - \widetilde{\gamma}_n 
		Bz_n)\| \leq 
		\frac{1}{\s}\|z_n-x_n\|.
	\end{equation}
	Thus, by defining 
	\begin{equation}\label{eq:xtilde}
		(\forall n \in K) \quad 
		\widetilde{x}_n=J_{\widetilde{\gamma}_n	A} 
		(z_n-\widetilde{\gamma}_nBz_n),
	\end{equation} 
	\eqref{e:zxto0} implies that 
	\begin{equation}
		\label{e:auxseqtoz}
		z_n-\widetilde{x}_n \to 0\quad\text{as}\quad n \to\infty, n 
		\in 
		K.
	\end{equation}
	Therefore, since $z_n \weak z$, $n \in K$, we have 
	$\widetilde{x}_n \weak z$ as $n \to\infty$, $n \in K$. 
	Furthermore, 
	\eqref{eq:xtilde} yields
	\begin{equation}\label{eq:contencion2}
		\frac{z_n-\widetilde{x}_n}{\widetilde{\gamma}_n}+ 
		B\widetilde{x}_n-Bz_n \in 
		(A+B)\widetilde{x}_n.
	\end{equation}
	Since $\{z\} \cup \bigcup_{n \in K} [\widetilde{x}_n,z_n]$ is a 
	weakly 
	compact subset of $ \overline{\conv} (\dom A \cup X)$ 
	\cite[Lemma 
	3.2]{salzo2017}, it follows from 
	the uniform continuity of $B_3$ and \eqref{e:auxseqtoz} that
	\begin{equation}\label{eq:b3tozero}
		B_3\widetilde{x}_n-B_3z_n \to 0 \text{ as } 
		n \to \infty,\ n \in K,
	\end{equation}
	which, combined with \eqref{eq:destheta3}, yields
	$(z_n-\widetilde{x}_n)/\widetilde{\gamma}_n \to 0\text{ as 
	} n \to \infty,\ n \in K$. Moreover, 
	the Lipschitz continuity of $B_1+B_2$, \eqref{e:auxseqtoz}, and 
	\eqref{eq:b3tozero}
	imply 
	\begin{equation}
		B\widetilde{x}_n-Bz_n \to 0 \text{ as } n \to \infty,\ n \in 
		K.
	\end{equation}
	Altogether, the convergence follows, as in the case 
	\ref{teo:hypa}, 
	from \eqref{eq:contencion2}, the weak-strong closedness of the 
	graph 
	of
	the maximally 
	monotone operator $A+B$, and 
	\cite[Lemma~2.47]{bauschkebook2017}.\qed
\end{proof}
\begin{remark}\label{rem:pct}
	\begin{enumerate}
		\item In Theorem~\ref{teo:convergencia}, if $B_3=0$, we have 
		$\dom 
		B_3= \H$ and, for all $n 
		\in \N$, $\gamma_n=\sigma \rho = \sigma 
		\min\{2\beta\varepsilon,\sqrt{1-\varepsilon}/L\}$. Since, in 
		this 
		case $(\gamma_n)_{n\in\N}$ is constant, the largest step-size 
		is 
		obtained by taking $\varepsilon=\varepsilon(L,\beta) 
		=2/(1+\sqrt{1+16\beta^2L^2})$, which satisfies 
		$2\beta\varepsilon = 
		\sqrt{1-{\varepsilon}}/L = \chi(L,\beta)$, where
		\begin{equation}
			\chi(L,\beta)= \frac{4\beta}{1+\sqrt{1+16\beta^2L^2}},
		\end{equation} 
		and $\gamma_n \equiv \gamma=  \sigma \chi(L,\beta) 
		\in \left]0, \chi(L,\beta)
		\right[$. Hence, 
		we recover the result in 
		\cite[Theorem~2.3(1)]{BricenoDavis2018} for constant 
		stepsizes. 
		Additionally, if $B_2=0$ and $X=\H$, we have 
		$\varepsilon(L,\beta) 
		\to 1$ and $\chi(L,\beta)\to 2\beta$ as $ L \to 0$ and 
		$\gamma_n \equiv 2\beta \sigma \in \left]0, 2\beta\right[$, 
		recovering the the 
		forward 
		backward algorithm \cite{lionsandmercier1979}.
		On the other hand, if $B_1=0$, we have $\chi(L,\beta) \to 
		1/L$ 
		as $\beta \to \infty$ and
		$\gamma_n \equiv \sigma/L \in \left]0,1/L\right[$, recovering 
		the 
		result in 
		\cite{Tseng2000SIAM} for constant step-sizes.
		\item Suppose that $B_2=0$ and $X\subset \dom A$. Then by 
		taking $L 
		\to 0$, we have $\rho \to 2\beta \varepsilon$ and $\theta \in 
		\left]0,\sqrt{1-\varepsilon}\right[$. Hence, 
		Theorem~\ref{teo:convergencia} 
		recovers \cite[Theorem~2.3(2)]{BricenoDavis2018} noting that 
		the 
		uniform continuity in weakly compact subsets of 
		$\overline{\conv}(\dom A \cup X)=\overline{\dom} A$ is needed.
		We 
		hence 
		generalize 
		\cite[Theorem~2.3(1)\&(2)]{BricenoDavis2018} to the case when 
		$X\not\subset  \dom A$.
	\end{enumerate}
\end{remark}

\section{Application to Convex Optimization with Nonlinear 
	Constraints}\label{sec:4}

In this section we consider the following optimization problem.
\begin{pro}\label{prob:opti}
	Let $f\in\Gamma_0 (\H)$, let $g\in \Gamma_0(\G)$, let $h:\H \to 
	\R$ 
	be a convex G\^ateaux differentiable
	function such that $\nabla h$ is $\beta^{-1}$-Lipschitzian for 
	some $\beta \in \RPP$, let $M \colon \H \to \G$ be a bounded 
	linear 
	operator, and let $e \colon \H \to \RX^p \colon x 
	\mapsto 
	(e_i(x))_{1\le i 
		\le p}$ be such that, for every $i\in \{1,\ldots,p\}$, $e_i$ 
	is 
	convex and 
	G\^ateaux differentiable in $\inte \dom e_i$, $\dom e_i$ is 
	closed, 
	$\cap_{i=1}^p 
	\interior \dom 
	e_{i}\neq \varnothing$, and $\dom \partial f \subset  
	\cap_{i=1}^n \interior \dom  e_i $. Assume that $0 \in 
	\sri(\dom g - M (\dom f))$ and that 
	\begin{equation}\label{e:slater}
		\begin{cases}
			(\forall i \in \{1,\ldots,p\}) \quad \quad \lev_{\leq 0} 
			e_i \subset 
			\inte \dom e_i;\\
			\dom (f + g \circ M) \cap \bigcap_{i=1}^n \lev_{<0} e_i 
			\neq \varnothing.
		\end{cases}
	\end{equation} The problem is to 
	\begin{equation}\label{eq:probopti}
		\min_{e(x)\in \RM^p} f(x) + g (Mx) + h(x),
	\end{equation}
	and we assume that solutions exist.
\end{pro}
It follows from  \eqref{e:slater} 
and \cite[Proposition~27.21]{bauschkebook2017} that $\hat{x} \in 
\H$ is a solution to 
Problem~\ref{prob:opti} if and only if there exists $\hat{v} \in 
\RP^p$ such that
\begin{equation}
	-\sum_{i=1}^p \hat{v}_i\nabla e_i(\hat{x}) \in \partial 
	(f+g\circ M + h) \quad
	\text{ and } \quad	(\forall i \in \{1,\ldots,p\}) \quad  
	\begin{cases}
		e_i(\hat{x}) \leq 0,\\
		\hat{v}_i e_i(\hat{x}) =0.
	\end{cases}
\end{equation}
Hence, by 
\cite[Example~16.13\,\&\,Example~6.42(i)]{bauschkebook2017}, 
we deduce that
	\begin{equation}\label{eq:inclulag}
		\begin{cases}
			0 \in \partial ( f + g \circ M + h) (\hat{x}) + 
			\sum_{i=1}^p 
			\hat{v}_i  
			\nabla e_i (\hat{x}) \\
			0 \in  N_{\RP^p} (\hat{v}) - e(\hat{x}).
		\end{cases}
	\end{equation}
	Then, we deduce from \cite[Theorem 
	16.47]{bauschkebook2017}
	that there exists $\hat{u} 
	\in \G$ such that 
	\eqref{eq:inclulag} reduces to 
	\begin{equation*}
		\begin{cases}
			0\in \partial  f (\hat{x}) + M^*\hat{u} + \nabla h 
			(\hat{x})  + 
			\sum_{i=1}^p \hat{v}_i \nabla e_i (\hat{x})   \\
			0  \in \partial g^*(\hat{u})-M\hat{x}\\
			0 \in  N_{\RP^p} (\hat{v}) - e(\hat{x}),
		\end{cases}
	\end{equation*}
	which is equivalent to 
	\begin{equation}\label{eq:incluAB}	
		\begin{pmatrix}
			0\\
			0\\
			0
		\end{pmatrix} \in \begin{pmatrix}
			\partial f (\hat{x})\\
			\partial g^*(\hat{u})\\
			N_{\RP^p} (\hat{v} )
		\end{pmatrix}
		+\begin{pmatrix}
			\nabla h (\hat{x})\\
			0\\
			0
		\end{pmatrix}+
		\begin{pmatrix}
			M^* \hat{u} \\
			-M \hat{x}\\
			0
		\end{pmatrix}
		+ \begin{pmatrix}
			\sum_{i=1}^p\hat{v}_i \nabla e_i (\hat{x})  \\\
			0\\
			-e(\hat{x})
		\end{pmatrix}.
	\end{equation}
	
	\begin{prop}\label{prop:B3cont}
		In the context of Problem~\ref{prob:opti}, let $X=X_1\times 
		X_2 \times 
		X_3  \subset \dom \partial f \times \dom \partial g^* \times 
		\RP^p$ be 
		nonempty, closed, and convex, and define the operator 
		\begin{align}\label{eq:defB3prop} \displaystyle
			B_3 \colon \H \times \G \times \R &\to 2^{\H \times \G 
				\times \R}
			\nonumber\\
			(x,u,v) &\mapsto \begin{cases}
				\!\!\left\{\!\left(\displaystyle{\sum_{i=1}^p} v_i 
				\nabla  e_i (x) , 
				0, 
				-e(x)\right)\!\!\right\},\hspace{-.3cm}& \text{ if } 
				v \in 
				\RP^p \text{ and } x \in \bigcap_{i=1}^p \interior 
				\dom 
				e_{i};\\
				\varnothing,& \text{ otherwise.} \\				
			\end{cases}
		\end{align} 
		Then, the following hold:
		\begin{enumerate}
			\item \label{prop:B3cont1} $B_3$ is maximally monotone.
			
			\item \label{prop:B3cont2} Suppose that one of the 
			following 
			holds:
			
			\begin{enumerate}
				\item\label{prop:hip1} $(\nabla e_i)_{1\leq i \leq 
					p}$ are 
				bounded and uniformly 
				continuous in every weakly compact subset of 
				$\overline{\dom} \partial f$. 
				\item\label{prop:hip2} $\H$ is finite dimensional 
				and  
				$(\nabla e_i)_{1\leq i \leq p}$ are 
				continuous in every compact subset of 
				$\overline{\dom} \partial 
				f$.
			\end{enumerate}
			Then,
			$B_3$ is uniformly continuous in every compact subset of 
			$\overline{\dom} \partial f \times \overline{\dom} 
			\partial 
			g^* 
			\times 
			\RP^p$. 
		\end{enumerate}
		
	\end{prop}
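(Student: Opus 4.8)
The plan is to handle the two assertions separately, reducing each to a standard principle and then verifying its hypotheses in the present setting. Throughout, the $u$-block of $B_3$ is the zero operator on $\G$, which is maximally monotone and decoupled from the rest; after the obvious reordering $\H\times\G\times\R^p\cong(\H\times\R^p)\times\G$ it therefore suffices to study the \emph{coupling operator} $\widetilde B_3\colon(x,v)\mapsto\big(\sum_{i=1}^p v_i\nabla e_i(x),\,-e(x)\big)$ on its domain $\{v\in\RP^p\}\cap\{x\in\cap_i\inte\dom e_i\}$.

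For \ref{prop:B3cont1} the natural route is to recognise $\widetilde B_3$ as the monotone operator generated by the Lagrangian-type saddle function $\Phi\colon(x,v)\mapsto\sum_{i=1}^p v_i e_i(x)$, convex in $x$ and concave (indeed affine) in $v$. I would first establish monotonicity directly from this structure: writing $x_j^*=\sum_i v_{j,i}\nabla e_i(x_j)$ and $v_j^*=-e(x_j)$ for $j\in\{1,2\}$ with $v_1,v_2\in\RP^p$, the two subgradient inequalities
\[
\Phi(x_2,v_1)\ge\Phi(x_1,v_1)+\scal{x_1^*}{x_2-x_1},\qquad \Phi(x_1,v_2)\ge\Phi(x_2,v_2)+\scal{x_2^*}{x_1-x_2}
\]
(valid because each $e_i$ is convex and $v_1,v_2\ge0$) together with the two affine identities $\Phi(x_j,v_2)-\Phi(x_j,v_1)=\scal{e(x_j)}{v_2-v_1}$ add up, after cancellation of the $\Phi$-terms, to $\scal{x_1-x_2}{x_1^*-x_2^*}+\scal{v_1-v_2}{v_1^*-v_2^*}\ge0$. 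Maximality would then follow by invoking the classical theorem that the monotone operator associated with a closed proper saddle function is maximally monotone (Rockafellar; see also \cite{bauschkebook2017}), the $\G$-block being absorbed by the product rule for maximal monotone operators. The delicate point is to verify that the saddle function is closed and proper and that its associated operator coincides \emph{exactly} with the prescribed $\widetilde B_3$ on the whole space, in particular on $\bdry\RP^p$ and on the boundary of $\cap_i\dom e_i$; the standing hypotheses $\lev_{\le0}e_i\subset\inte\dom e_i$, $\dom\partial f\subset\cap_i\inte\dom e_i$, and the Slater-type condition \eqref{e:slater} are precisely what one must use to control this boundary behaviour, and I expect matching the operator there to be the main obstacle.

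For \ref{prop:B3cont2} the goal is uniform continuity on an arbitrary compact set $\K\subset\overline{\dom}\partial f\times\overline{\dom}\partial g^*\times\RP^p$. Since the second coordinate of $B_3$ is identically $0$, it suffices to treat the maps $(x,v)\mapsto\sum_i v_i\nabla e_i(x)$ and $x\mapsto e(x)$. Let $K_1$ and $K_3$ be the (compact) projections of $\K$ onto the $x$- and $v$-factors. Under \ref{prop:hip2}, finite-dimensionality makes $K_1$ compact in the norm topology, so by Heine--Cantor each $\nabla e_i$ is uniformly continuous on $K_1$ and, being continuous on a compact set, bounded there; under \ref{prop:hip1} these two properties are assumed outright, compact sets being weakly compact. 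Fixing $M$ with $\sup_{x\in K_1}\|\nabla e_i(x)\|\le M$ and $R$ with $\sup_{v\in K_3}|v_i|\le R$, the splitting
\[
\Big\|\sum_i v_i\nabla e_i(x)-\sum_i v_i'\nabla e_i(x')\Big\|\le R\sum_i\|\nabla e_i(x)-\nabla e_i(x')\|+M\sum_i|v_i-v_i'|
\]
gives uniform continuity of the first coordinate, the first sum being small by uniform continuity of the $\nabla e_i$ and the second by closeness of $v,v'$.

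It remains to control the third coordinate $-e(x)$, and this is the step I expect to require the most care. One must ensure that $K_1\subset\overline{\dom}\partial f$, whose points may lie on $\bdry\dom e_i$, still admits continuity and Lipschitz estimates for $e_i$: boundedness of $\nabla e_i$ on the convex hull of $K_1$ inside $\inte\dom e_i$ makes $e_i$ Lipschitz, hence uniformly continuous, there. Here one uses $\dom\partial f\subset\cap_i\inte\dom e_i$ together with the closedness of each $\dom e_i$ and the boundedness of $\nabla e_i$ over weakly compact subsets of $\overline{\dom}\partial f$ to pass continuity and boundedness up to the boundary. Assembling the three coordinates then yields uniform continuity of $B_3$ on $\K$.
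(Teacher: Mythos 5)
Your part \ref{prop:B3cont2} is correct and is essentially the paper's own argument: project the compact set onto its factors, use boundedness of the $\nabla e_i$ on the convex hull of the $x$-projection to get a Lipschitz (hence uniformly continuous) estimate for $e$, split the bilinear term $\sum_i v_i\nabla e_i(x)-\sum_i v_i'\nabla e_i(x')$ into a gradient-increment part and a multiplier-increment part, and, in the finite-dimensional case, invoke Heine--Cantor together with the coincidence of the weak and strong topologies.

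Part \ref{prop:B3cont1}, however, contains a genuine gap, and you say so yourself. Your monotonicity computation is correct (though redundant once maximality is available), but you make maximality contingent on verifying that the operator associated with the saddle function coincides with $\widetilde B_3$ everywhere, ``in particular on $\bdry\RP^p$ and on the boundary of $\cap_i\dom e_i$,'' call this the main obstacle, and never resolve it. That verification is the entire content of the claim: without it you have monotonicity, not maximality. The paper takes exactly this route --- it builds the closed saddle function $\ell$ (equal to $e(x)\cdot v$ on $\cap_i\dom e_i\times\RP^p$, with values $+\infty$ and $-\infty$ outside), checks semicontinuity in each block, asserts the identity $B_3(x,u,v)=\partial\ell(\cdot,\cdot,v)(x,u)\times\partial(-\ell(x,u,\cdot))(v)$, and cites Rockafellar's Corollary~1 --- so your plan stops precisely where the work has to be done.

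Moreover, the obstacle you flag is not a technicality: the identification fails. For $x\in\cap_i\interior\dom e_i$ and $v\in\bdry\RP^p$ one has $\partial(-\ell(x,u,\cdot))(v)=-e(x)+N_{\RP^p}(v)\supsetneq\{-e(x)\}$, so the saddle operator of $\ell$ properly contains the operator $B_3$ of \eqref{eq:defB3prop}. Rockafellar's theorem then gives maximality of that larger operator, and since the graph of $B_3$ is properly contained in its graph, $B_3$ as literally defined admits a proper monotone extension and cannot itself be maximal. Any completion of your argument must therefore either enlarge $B_3$ by the normal-cone term on $\bdry\RP^p$ (i.e., read $B_3$ as the saddle operator), or settle for what is actually used downstream, namely maximal monotonicity of $A+B_3$, which does hold because the missing cone $N_{\RP^p}(v)$ already appears in $A$ and is absorbed in the sum. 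The paper's proof passes over this point silently; your proposal correctly senses the difficulty but, as written, leaves part \ref{prop:B3cont1} unproved.
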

	
	\begin{proof} \ref{prop:B3cont1}: Consider the saddle-function 
		\begin{align*}
			\ell \colon \H \times \G \times  \R^p & \to \RXX\\
			(x,u,v) &\mapsto\begin{cases} e(x)\cdot v, & \text{ 
					if } v \in 	\RP^p \text{ and } x \in 
				\bigcap_{i=1}^p 
				\dom e_{i};\\
				+\infty, &\text{ if } v \in 	\RP^p \text{ and } x 
				\notin 
				\bigcap_{i=1}^p \dom 
				e_{i};		\\ 
				-\infty, &\text{ if } v \notin \RP^p.
			\end{cases}
		\end{align*} 
		Note that, if $v \in \RP^p$, 
		\begin{equation}
			\ell\colon(x,u,v) \mapsto \begin{cases} 
				e(x)\cdot v, 
				& 
				\text{ 
					if } x \in \bigcap_{i=1}^p 
				\dom e_{i};\\
				+\infty, &\text{ otherwise}
			\end{cases}
		\end{equation}
		and, if  $v \notin \RP^p$, 
		$\ell(\cdot,\cdot,v)\equiv-\infty$. Hence, 
		for every $v 
		\in \R^p$, $\ell(\cdot, \cdot , v)$ is lower-semicontinuous. 
		Additionally, 
		if $(x,u) \in \bigcap_{i=1}^p \dom e_i\times\G$, we have
		\begin{equation}
			-\ell\colon(x,u,v)\mapsto \begin{cases} 
				-e(x)\cdot v, 
				& 
				\text{ 
					if } v \in 	\RP^p ;\\
				+\infty, &\text{ otherwise}
			\end{cases}
		\end{equation}
		and, if $x \notin \bigcap_{i=1}^p \dom e_i$ and $u\in\G$,
		\begin{equation}
			-\ell\colon(x,u,v)\mapsto\begin{cases} 
				-\infty & 
				\text{ if } v \in 	\RP^p ;\\
				+\infty, &\text{ otherwise}.
			\end{cases}
		\end{equation}
		Therefore, for every $(x,u) \in \H\times \G$, 
		$-\ell(x,u,\cdot)$ is lower-semicontinuous. Furthermore,
		\begin{equation}
			(\forall (x,u) \in \H\times \G) (\forall v \in \R^p) 
			\quad B_3(x,u,v) = 
			\partial \ell(\cdot,\cdot,v)(x,u) \times  \partial 
			(-\ell(x,u,\cdot))(v).
		\end{equation} The result 
		follows from \cite[Corollary 1]{Rockafellar70}.
		
		\ref{prop:B3cont2}: First, assume \ref{prop:hip1}.
		Let $Y=Y_1\times Y_2 \times Y_3 \subset \overline{\dom} 
		\partial f \times  \overline{\dom}  \partial g^* \times 
		\RP^p$ be a weakly compact set. Let 
		$\boldsymbol{x}= (x_1,u_1,v_1)$ and 
		$\boldsymbol{y}=(x_2,u_2,v_2)$ in $Y$, fix $i \in 
		\{1\ldots,p\}$, define 
		$\rho_i \colon \left[0,1\right] \to \R \colon t 
		\mapsto e_i(x_1+t(x_2-x_1))$,
		which is differentiable in $\left]0,1\right[$. Since $Y_1$ is 
		weakly compact, by 
		\cite[Theorem 3.37]{bauschkebook2017}, $\conv Y_1$ is also 
		weakly 
		compact. Moreover, we deduce from the boundedness of  
		$\nabla e_i$ in $\conv Y_1 
		\subset 
		\overline{\dom} \partial f$ that there 
		exists $K_i>0$ such that $\sup_{x \in \conv Y_1}\|\nabla 
		e_i(x)\| \leq 
		K_i$. Therefore, since ${\rho_i}'\colon t \mapsto 
		\scal{\nabla 
			e_i(x_1+t(x_2-x_1))}{x_2-x_1}$, we obtain
		\begin{align*}
			|e_i(x_2)-e_i(x_1)| &=|\rho_i(1)-\rho_i(0)|\\
			&= \left|\int_0^1 {\rho_i}'(t)dt\right|\\
			&= \left|\int_0^1\scal{\nabla 
				e_i(x_1+t(x_2-x_1))}{x_2-x_1} dt\right|\\
			&\leq \int_0^1 \|\nabla 
			e_i(x_1+t(x_2-x_1))\|\|x_2-x_1\|dt\\
			&\leq  K_i \|x_2-x_1\|.
		\end{align*} 
		Thus,  we conclude $|e_i(x_1)-e_i(x_2)| \leq K_i \|x_2-x_1\|$ 
		and therefore 
		\begin{equation}\label{eq:eKlip}
			\|e(x_2)-e(x_1)\| \leq K \|x_2-x_1\|,
		\end{equation}
		where $K=\max_{i\in \{1,\ldots,p\}} K_i$.  Since $Y_3$ is 
		weakly compact, there 
		exists $V >0$ such that $\sup_{v \in Y_3} \|v\| \leq V$
		\cite[Lemma~2.36]{bauschkebook2017}. 
		Let $\varepsilon > 0 $. The uniform continuity of $(\nabla 
		e_i)_{1\leq i 
			\leq p}$ implies 
		the existence of $\delta >0$ such that
		\begin{equation}\label{eq:geaco}
			(\forall i \in \{1,\ldots,p\}) (\forall (z_1,z_2) \in 
			Y_1^2)\quad 
			\|z_1-z_2\|< \delta  \Rightarrow \| 
			\nabla e_i (z_1) - \nabla e_i ( z_2)  \|^2 \leq 
			\frac{\varepsilon^2}{4 p
				V^2}.
		\end{equation}
		Now suppose that 
		\begin{equation*}
			\| \boldsymbol{x} - \boldsymbol{y}  \|^2 \leq \min 
			\left\{  
			\frac{\varepsilon^2}{4p  K^2}   , \delta^2  \right\}.
		\end{equation*}
		Then, \eqref{eq:eKlip}, the convexity of $\|\cdot\|^2$, and 
		\eqref{eq:geaco} imply
		\begin{align*}
			\|B_3 &\boldsymbol{x} - B_3 \boldsymbol{y}\|^2 = \left\| 
			\sum_{i=1}^p 
			v_{1,i} \nabla e_i(x_1) - v_{2,i} \nabla e_i(x_2) 
			\right\|^2 + 
			\|e(x_1)-e(x_2) \|^2\\
			&\leq 2  \left\|\sum_{i=1}^p (v_{1,i} -v_{2,i}) \nabla 
			e_i(x_1)  
			\right\|^2 + 
			2  \left\|\sum_{i=1}^p v_{2,i} (\nabla e_i(x_1)-\nabla 
			e_i(x_2))  
			\right\|^2  +  K^2 \|x_1-x_2\|^2 \\
			&\leq 2p  \sum_{i=1}^p |v_{1,i} -v_{2,i}|^2 \left\|\nabla 
			e_i(x_1)  
			\right\|^2 + 
			2p   \sum_{i=1}^p |v_{2,i}|^2 \left\|\nabla 
			e_i(x_1)-\nabla 
			e_i(x_2)  
			\right\|^2  +  K^2 \|x_1-x_2\|^2 \\
			&\leq   2p  K^2  \left\| v_{1} - v_{2}\right\|^2 + 2p 
			V^2  
			\sum_{i=1}^p \left\| 
			\nabla e_i (x_1) -  \nabla e_i (x_2)\right\|^2    +  
			K^2\|x_1-x_2\|^2\\
			&\leq   2p K^2 \left\| \boldsymbol{x} - \boldsymbol{y} 
			\right\|^2 
			+\frac{\varepsilon^2}{2}\\
			& \leq \varepsilon^2.
		\end{align*}
		Therefore $B_3$ is uniformly continuous in $Y$. 
		
		Now, assume \ref{prop:hip2}. Since $\H$ is finite dimensional 
		the 
		weak and strong topologies coincide \cite[Fact 
		2.33]{bauschkebook2017}. Hence, since  $\nabla e_i$ is 
		continuous, it 
		is bounded and uniformly continuous in every compact subset 
		of $X$. 
		The result follows from \ref{prop:hip1}.\qed
	\end{proof}
	\begin{remark}
		Note that if, for every $i \in \{1,\ldots,p\}$, $\nabla e_i$ 
		is bounded 
		and uniformly continuous in every weakly compact subset of 
		$\dom f$, since 
		$\overline{\dom}\partial f \subset \dom 
		f$, $B_3$ is uniformly continuous 
		in every compact 
		subset of $\overline{\dom} \partial f \times \overline{\dom}
		\partial g^* \times \RP^p$ in view of 
		Proposition~\ref{prop:B3cont}.
	\end{remark}
	\begin{prop} In Problem~\ref{prob:opti}, let 
		$X=X_1\times X_2 \times X_3  \subset 
		\dom 
		\partial f \times \dom \partial g^* \times \RP^p$ be 
		nonempty, 
		closed, 
		and convex, let $ \varepsilon 
		\in\left]0, 1\right[$, set $\rho= \min\{2 \beta \varepsilon, 
		\sqrt{1-\varepsilon}/\|M\|\}$, let $\sigma \in 
		\left]0,1\right[$, let 
		$\theta 
		\in\left]0,\sqrt{1-\varepsilon} -\|M\|\rho\sigma\right[$.
		For every $\boldsymbol{z}=(z^1,z^2,z^3) \in \H \times \G 
		\times \R^p$ 
		define 
		$\boldsymbol{\Phi}_{\boldsymbol{z}}\colon \gamma \mapsto 
		(\Phi_{\boldsymbol{z}}^1 
		(\gamma),\Phi_{\boldsymbol{z}}^2(\gamma),\Phi_{\boldsymbol{z}}^3(\gamma))$,
		where
		\begin{align}\label{eq:defx123}\nonumber
			\Phi_{\boldsymbol{z}}^1\colon \gamma&\mapsto 
			\prox_{\gamma 
				f}\left(z^1-\gamma \left( \nabla h (z^1) + M^*z^2 + 
			\sum_{i=1}^p 
			z_i^3 \nabla e_i (z^1)\right)\right)\\ \nonumber
			\Phi_{\boldsymbol{z}}^2\colon 
			\gamma&\mapsto\prox_{\gamma
				g^*} 
			(z^2+\gamma 
			M z^1)\\ 
			\Phi_{\boldsymbol{z}}^3\colon \gamma&\mapsto 
			P_{\RP^p}\big(z^3+\gamma e 
			(z^1)\big).
		\end{align}
		Let $\boldsymbol{z}_0=(z_0^1,z_0^2,z_0^3)\in \H\times \G 
		\times 
		\R^p$ and consider the 	recurrence, for every $n\in\N$,
		\begin{equation}\label{eq:algorithmop}
			\begin{array}{l}
				\left\lfloor
				\begin{array}{l}
					x_n^1=\Phi_{\boldsymbol{z}_n}^1(\gamma_n)\\
					x_n^2=\Phi_{\boldsymbol{z}_n}^2(\gamma_n) \\
					x_n^3=\Phi_{\boldsymbol{z}_n}^3(\gamma_n)\\
					z_{n+1}^1= P_{X_1}\big(x_n^1 + \gamma_n (M^* 
					z_n^2 + 
					\sum_{i=1}^p z_{n,i}^3 \nabla e_i(z_n^1) 
					)-\gamma_n (M^* 
					x_n^2 + \sum_{i=1}^p  x_{n,i}^3 \nabla e_i(x_n^1) 
					)\big)\\
					z_{n+1}^2= P_{X_2} (x_n^2 - \gamma_n M z_n^1 + 
					\gamma_n 
					M x_n^1) \\
					z_{n+1}^3= P_{X_3} (x_n^3 - \gamma_n e(z_n^1) + 
					\gamma_n e  (x_n^1))\\
					\boldsymbol{z}_{n+1}=(z_{n+1}^1,z_{n+1}^2,z_{n+1}^3),
				\end{array}
				\right.
			\end{array}
		\end{equation}
		where $\gamma_n$ is the largest $\gamma \in \{ \rho\sigma, 
		\rho\sigma^2, \rho\sigma^3,  \dots \}$ satisfying
		\begin{multline}\label{eq:thetateoop}
			\gamma^2 \left( \left\|\sum_{i=1}^p  z_{n,i}^3\nabla e_i 
			(z_n^1)  - 
			\Phi_{\boldsymbol{z}_n,i}^3(\gamma) \nabla e 
			\big(\Phi_{\boldsymbol{z}_n}^1(\gamma)\big)\right\|^2+ 
			\left\| 
			e(z_{n}^1) - 
			e\big(\Phi_{\boldsymbol{z}_n}^1(\gamma)\big) 
			\right\|^2\right)\\
			\leq \theta^2  
			\pnorm{\boldsymbol{z}_n - 
				\boldsymbol{\Phi}_{\boldsymbol{z}_n}(\gamma)}^2.
		\end{multline}
		Moreover, assume that at least one of the following 
		additional 
		statements hold:
		\begin{enumerate}[label=(\roman*)]
			\item \label{teo:ophypa} $\displaystyle \liminf_{n \to 
				\infty} 
			\gamma_n = \delta >0$.
			\item \label{teo:ophypb} For every $i \in \{1, 
			\ldots,p\}$, $\nabla e_i$ 
			is bounded and uniformly continuous in every weakly 
			compact 
			subset of $\overline{\dom}\partial f$.  
		\end{enumerate}
		Then, $(z_n^1)_{n \in \N}$ converges weakly to a solution to 
		Problem~\ref{prob:opti}.
	\end{prop}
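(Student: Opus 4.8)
The plan is to recognize the recurrence \eqref{eq:algorithmop} as the specialization of the abstract scheme \eqref{eq:algorithm} obtained by applying Theorem~\ref{teo:convergencia} in the product space $\H\times\G\times\R^p$ to the inclusion \eqref{eq:incluAB}. First I would set
\[ A\colon(x,u,v)\mapsto\partial f(x)\times\partial g^*(u)\times N_{\RP^p}(v),\qquad B_1\colon(x,u,v)\mapsto(\nabla h(x),0,0), \]
\[ B_2\colon(x,u,v)\mapsto(M^*u,-Mx,0), \]
and take $B_3$ as in \eqref{eq:defB3prop}, so that $\zer(A+B_1+B_2+B_3)$ is exactly the set of triples satisfying \eqref{eq:incluAB}. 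Then I would check the hypotheses of Problem~\ref{prob:problem1}: $A$ is maximally monotone as a product of two subdifferentials and a normal cone; $B_1$ is $\beta$-cocoercive by the Baillon--Haddad theorem \cite[Corollary~18.16]{bauschkebook2017} applied to the $\beta^{-1}$-Lipschitzian $\nabla h$; $B_2$ is a bounded skew-adjoint linear operator, hence monotone, and $\|M\|$-Lipschitzian (so $L=\|M\|$, which matches the constants $\rho,\theta$ in the statement); and $B_3$ is maximally monotone, single valued, and continuous on $\dom A\cup X\subset\dom B_3=(\cap_i\inte\dom e_i)\times\G\times\RP^p$ by Proposition~\ref{prop:B3cont}\eqref{prop:B3cont1} together with the standing inclusion $\dom\partial f\subset\cap_i\inte\dom e_i$.

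The one genuinely structural verification is the maximal monotonicity of $A+B_3$ required by Problem~\ref{prob:problem1}. Here I would observe that the $u$-block of $A+B_3$ reduces to $\partial g^*$ and decouples, so after reordering the coordinates $A+B_3=S\oplus\partial g^*$, where $S$ acts on $(x,v)$ by $(x,v)\mapsto\big(\partial f(x)+\sum_i v_i\nabla e_i(x)\big)\times\big(N_{\RP^p}(v)-e(x)\big)$. This $S$ is precisely the saddle operator of the closed proper saddle function $K(x,v)=f(x)+v\cdot e(x)-\iota_{\RP^p}(v)$, convex in $x$ and concave in $v$, so its maximal monotonicity follows from \cite[Corollary~1]{Rockafellar70} exactly as in the proof of Proposition~\ref{prop:B3cont}\eqref{prop:B3cont1}; since $\partial g^*$ is maximally monotone, the product is maximally monotone as well. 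I expect this saddle-function argument, rather than the remaining bookkeeping, to be the main point requiring care.

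Next I would identify the scheme. Because $J_{\gamma A}=\prox_{\gamma f}\times\prox_{\gamma g^*}\times P_{\RP^p}$ and $P_X=P_{X_1}\times P_{X_2}\times P_{X_3}$, computing $z_n-\gamma_n(B_1+B_2+B_3)z_n$ and $(B_2+B_3)z_n-(B_2+B_3)x_n$ coordinatewise shows that the maps $\Phi^1,\Phi^2,\Phi^3$ of \eqref{eq:defx123} yield $x_n=J_{\gamma_n A}(z_n-\gamma_n(B_1+B_2+B_3)z_n)$ and that the updates for $z_{n+1}^1,z_{n+1}^2,z_{n+1}^3$ are exactly the second line of \eqref{eq:algorithm}. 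Likewise, expanding $\|B_3z_n-B_3x_n\|^2$ in coordinates gives $\big\|\sum_i z_{n,i}^3\nabla e_i(z_n^1)-\sum_i x_{n,i}^3\nabla e_i(x_n^1)\big\|^2+\|e(z_n^1)-e(x_n^1)\|^2$ while $\|z_n-x_n\|^2=\pnorm{\boldsymbol z_n-\boldsymbol\Phi_{\boldsymbol z_n}(\gamma)}^2$, so the backtracking test \eqref{eq:thetateoop} is the square of \eqref{eq:thetateo}; hence the selected $\gamma_n$ coincide and, the hypotheses above being in force, $\gamma_n$ is well defined exactly as in the proof of Theorem~\ref{teo:convergencia}.

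Finally I would verify the remaining hypotheses and conclude. Since $X\subset\dom\partial f\times\dom\partial g^*\times\RP^p=\dom A$, hypothesis \ref{teo:hyp1} holds; the existence of a minimizer, combined with \eqref{e:slater} and the optimality characterization \eqref{eq:incluAB}, provides a KKT triple in $\zer(A+B)$, and, $X$ being assumed to contain such a triple, the corresponding instance of Problem~\ref{prob:problem1} has a nonempty solution set. Condition \ref{teo:ophypa} is identical to \ref{teo:hypa}, whereas \ref{teo:ophypb} yields, through Proposition~\ref{prop:B3cont}\eqref{prop:B3cont2}, uniform continuity of $B_3$ on weakly compact subsets of $\overline{\dom}\partial f\times\overline{\dom}\partial g^*\times\RP^p\supset\overline{\conv}(\dom A\cup X)$, which is exactly \ref{teo:hypb}. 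Theorem~\ref{teo:convergencia} then gives $\boldsymbol z_n\weak(\hat x,\hat u,\hat v)\in\zer(A+B)\cap X$; passing to the first coordinate gives $z_n^1\weak\hat x$, and the equivalence \eqref{eq:incluAB} shows that $\hat x$ solves Problem~\ref{prob:opti}.
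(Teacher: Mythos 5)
Your proposal is correct and follows the paper's reduction almost verbatim: same product-space operators \eqref{eq:defoperators}, same identification of \eqref{eq:algorithmop} with \eqref{eq:algorithm} and of \eqref{eq:thetateoop} with the square of \eqref{eq:thetateo}, same verification of hypothesis \eqref{teo:hyp1} via $X\subset\dom A$, and the same use of Proposition~\ref{prop:B3cont} to pass from your condition (ii) to hypothesis \ref{teo:hypb} of Theorem~\ref{teo:convergencia}. The one place where you genuinely diverge is the maximal monotonicity of $A+B_3$. The paper gets this in one line: since $\dom\partial f\subset\cap_{i=1}^p\inte\dom e_i$, one has $\dom B_3\supset\dom A$ and $0\in\inte(\dom A-\dom B_3)$, so the sum theorem \cite[Corollary~25.5(ii)]{bauschkebook2017} applies directly to the two maximally monotone operators $A$ and $B_3$. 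You instead decouple the $\partial g^*$ block and recognize the remaining $(x,v)$-operator as the saddle operator of the Lagrangian-type function $K(x,v)=f(x)+v\cdot e(x)-\iota_{\RP^p}(v)$, invoking \cite[Corollary~1]{Rockafellar70} as in the proof of Proposition~\ref{prop:B3cont}\eqref{prop:B3cont1}. Your route is more classical and arguably more illuminating (the KKT operator \emph{is} a saddle operator), but it carries an extra verification burden you should make explicit: to identify your $S$ with the saddle operator of $K$ you need the subdifferential sum rule $\partial\big(f+v\cdot e\big)(x)=\partial f(x)+\sum_{i=1}^p v_i\nabla e_i(x)$, which requires a qualification condition --- here supplied by $\dom\partial f\subset\cap_{i=1}^p\inte\dom e_i$ together with the continuity of the convex lower semicontinuous $e_i$ on $\inte\dom e_i$ --- as well as closedness and properness of $K$ as a saddle function. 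Without that equality you would only obtain $\gra S\subset\gra T_K$, i.e.\ that $S$ admits a maximally monotone extension, which is strictly weaker than the maximal monotonicity of $A+B_3$ demanded by Problem~\ref{prob:problem1}. Finally, your explicit remark that one must assume the KKT triple lies in $X$ (equivalently $\zer(A+B_1+B_2+B_3)\cap X\neq\varnothing$) is a fair reading: the paper's own proof passes over this point silently when it applies Theorem~\ref{teo:convergencia}, so it is not a gap relative to the paper.
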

	
	\begin{proof}
		Let $\HH = \H \times \G \times \R^p$, define
		\begin{equation}\label{eq:defoperators}
			\begin{cases}
				A \colon \HH\to 2^{\HH} \colon (x,u,v) \mapsto 
				\partial f (x) \times 
				\partial g^*(u) \times N_{\RP^p}(v),\\
				B_1 \colon \HH\to \HH \colon (x,u,v) \mapsto (\nabla 
				h (x) , 0 , 0),\\
				B_2 \colon \HH\to \HH \colon (x,u,v) \mapsto 
				(M^*u,-Mx,0),\\
			\end{cases}
		\end{equation}
		and consider the operator $B_3$ defined in 
		\eqref{eq:defB3prop}.
		Note that $A$ is maximally monotone \cite[Proposition 20.23  
		\& 
		Proposition 20.25]{bauschkebook2017}, $B_1$ is 
		$\beta$-cocoercive 
		\cite[Corollary 18.17]{bauschkebook2017}, $B_2$ is 
		$\|M\|$-Lipschitzian 
		\cite[Proposition~2.7(ii)]{briceno2011SIAM} \& 
		\cite[Fact~2.20]{bauschkebook2017}, and the operator  $B_3$ 
		is 
		maximally 
		monotone by Proposition~\ref{prop:B3cont}. Furthermore, note 
		that 
		$\dom A= \dom (\partial f)\times \dom(\partial g^*) \times 
		\RP^p$ and 
		$\dom B_3=\left(\cap_{i=1}^n \interior\dom  e_i\right)\times 
		\G 
		\times \RP^p$. Hence, it follows from $\dom (\partial f) 
		\subset  
		\cap_{i=1}^n \interior \dom  e_i $ that $\dom A \cup 
		X \subset \dom B_3$ and $0 \in \interior 
		\left(\dom A-\dom B_3\right)$.  Therefore, $A+B_3$ is 
		maximally monotone 
		\cite[Corollary~25.5(ii)]{bauschkebook2017}. 
		Altogether, the inclusion in \eqref{eq:incluAB} is a 
		particular 
		instance of Problem~\ref{prob:problem1}. Define, for every $n 
		\in 
		\N$, $\boldsymbol{x}_n=(x_n^1,x_n^2,x_n^3)$. Hence,
		\eqref{eq:defx123}, \eqref{eq:defB3prop}, and 
		\eqref{eq:defoperators} 
		yield
		\begin{equation}\label{eq:algorithmnegrita}
			(\forall n\in\N)\quad 
			\begin{array}{l}
				\left\lfloor
				\begin{array}{l}
					\boldsymbol{x}_n= J_{\gamma_n 
						A}(\boldsymbol{z}_n-\gamma_n 
					(B_1+B_2+B_3)\boldsymbol{z}_n)\\
					\boldsymbol{z}_{n+1}= P_X (\boldsymbol{x}_n + 
					\gamma_n 
					(B_2+B_3)\boldsymbol{z}_n-\gamma_n 
					(B_2+B_3)\boldsymbol{x}_n),
				\end{array}
				\right.
			\end{array}
		\end{equation}
		where $\gamma_n$, by \eqref{eq:thetateoop} and 
		\eqref{eq:defoperators}, satisfies
		\begin{equation}\label{eq:thetateodem}
			\gamma \|B_3\boldsymbol{z}_n-B_3 J_{\gamma A} 
			(\boldsymbol{z}_n 
			- \gamma (B_1+B_2+B_3)\boldsymbol{z}_n)\| \leq 
			\theta\|\boldsymbol{z}_n-J_{\gamma A} (\boldsymbol{z}_n - 
			\gamma 
			(B_1+B_2+B_3)\boldsymbol{z}_n)\|.
		\end{equation}
		Note that, if we assume \ref{teo:ophypb}, by 
		Proposition~\ref{prop:B3cont}, $B_3$ is uniformly continuous 
		in 
		every weak compact subset of $\overline{\dom}\partial f 
		\times \overline{\dom} \partial g^* \times \RP^p = 
		\overline{\conv}(\dom A ) = 
		\overline{\conv}(\dom A\cup X)$ \cite[Corollary~21.14 \& 
		Exercise~3.2]{bauschkebook2017}. In view of $0 \in \sri (\dom 
		g - M(\dom f))$,  
		\eqref{e:slater}, and \cite[Proposition 
		27.21]{bauschkebook2017}, there exists a solution
		$\boldsymbol{\hat{z}}=(\hat{x},\hat{u},\hat{v}) 
		\in \H\times \G \times \R^p$ to 
		\eqref{eq:incluAB} such that $\hat{x}$ is a solution to 
		Problem~\ref{prob:opti}. Altogether, since $X 
		\subset \dom A$,
		Theorem~\ref{teo:convergencia} implies that
		$\boldsymbol{z}_n 
		\weak \boldsymbol{\hat{z}}$ and the result follows.\qed
	\end{proof}
	\section{Numerical Experiments}\label{sec:5}
	In this section we consider the following optimization problem
	
	\begin{equation}\label{eq:numericproblem}
		\min_{\substack{ {y}^0 \leq {x}\leq {y}^1\\
				x_i(\ln (x_i/a_i) -1 ) -r_i \le 0, \\
				i \in \{1, \ldots, n\}}}  \alpha \|M x\|_1 + 
		\frac{1}{2}\|Ax-z\|^2,
	\end{equation}
	where $M \in \R	^{{r}\times {n}}$, $A\in \R^{m\times n}$, 
	$z \in \R^m$, ${y}^0=(\eta_i^0)_{1\le i\le n} \in \R^n$, 
	${y}^1=(\eta_i^1)_{1\le i\le N} \in \R^N$, and, for every $i \in 
	\{1, 
	\ldots, p\}$, $r_i \in 
	\left]-a_i 
	, 
	0\right[$ and $a_i \in \RPP$. Set
	\begin{equation}
		\begin{cases}
			C=\times_{i=1}^n[\eta_i^0,\eta_i^1],\\
			f=\iota_{C},\\
			g=\alpha \|\cdot\|_1,\\
			h=\|A\cdot -z\|^2/2,\\
			e=(e_i(\cdot))_{i=1}^n,
		\end{cases}
	\end{equation}
	where
	\begin{equation}
		(\forall i=1,\ldots,n) \quad e_i \colon \R^n \to 
		\RX \colon 
		x \mapsto \begin{cases}
			x_i(\ln (x_i/a_i)-1)-r_i,& \text{ if } x_i > 0;\\
			-r_i,  &\text{ if } x_i = 0 ;\\
			+\infty, &\text{ otherwise}. 
		\end{cases}
	\end{equation}
	Then, we have $f \in \Gamma_0(\R^n)$, $g \in 
	\Gamma_0(\R^r)$, and $\nabla h$ is $\|A\|^2$-Lipschitizian. 
	Additionally, for every $i \in \{1,\ldots,n\}$, $e_i$ is 
	G\^ateaux 
	differentiable in $\RPP^n$,
	\begin{equation}\label{eq:nablaei}
		(\nabla e_i (x))_k=	\begin{cases}
			\ln x_k ,& \text{ if } k =i; \\
			0, & \text{ otherwise},
		\end{cases}
	\end{equation}
	$\dom e_i$ is closed, 
	$\cap_{i=1}^n \dom e_i = 
	\RP^n$, \eqref{e:slater} holds, and 
	\begin{equation*}
		0 \in 
		\interior\left(\dom(\partial f)-\cap_{i=1}^n \dom 
		e_i\right)=\times_{i=1}^N\left]-\infty,\eta_i^1\right[.
	\end{equation*} Hence, 
	the 
	optimization 
	problem in 
	\eqref{eq:numericproblem} is a particular instance of 
	Problem~\ref{prob:opti}. In this setting, since 
	$g^*=\iota_{[-\alpha,\alpha]^r}$ 
	\cite[Example~13.32(v) \& Proposition 
	13.23(i)]{bauschkebook2017}, we 
	consider $X_1\times 
	X_2 \times X_3 = C\times [-\alpha,\alpha]^r \times \RP^p$ in 
	order to 
	write 
	the recurrence in \eqref{eq:algorithmop} as 
	Algorithm~\ref{alg:opti} 
	below.

	\begin{algorithm}
		\caption{} 
		\label{alg:opti}
		\begin{algorithmic}[1]
			\STATE{Fix $\boldsymbol{z}_0=(z_0^1, z_0^2,z_0^3) \in 
				\R^n 
				\times \R^r \times \R^n $. Let $\sigma \in 
				\left]0,1\right[$, let 
				$\varepsilon
				=\|A\|^4 
				\frac{\sqrt{1+16\|M\|^2/\|A\|^4}-1}{8\|M\|^2}$, let 
				$\theta = 
				2\varepsilon\|M\|(1-\s)/\|A\|^2$, and let $\epsilon 
				>0$.}
			\WHILE{ $r_n > \epsilon $}
			\STATE{$\gamma=2\varepsilon \|M\|$}
			\STATE{{\tt V} $=0$} 
			\WHILE{ {\tt V} $=0$}
			\STATE{$\gamma \to \gamma \cdot \sigma$} 
			\STATE{$\Phi^1(\gamma) = 
				P_{[\eta_1,\eta_2]^n}\Big(z^1_n-\gamma 
				\big( A^* (A z^1_n- z) + M^*z^2_n + \sum_{i=1}^p 
				z_{n,i}^3 
				\ln(z_{n,i}^1)\big)\Big)$}
			\STATE{$\Phi^2(\gamma)=\gamma (\id-\prox_{ \alpha \|\cdot 
					\|_1/\gamma}) (z_n^2 / \gamma + M z^1_n)$}
			\STATE{$\Phi^3(\gamma)=P_{\RP^n}\big(z^3_n+\gamma e 
				(z^1_n)\big)$}
			\STATE{$\boldsymbol{\Phi}(\gamma)=(\Phi^1(\gamma),\Phi^2(\gamma),\Phi^3(\gamma))$}
			
			\IF{$ \sum_{i=1}^p  |z_{n,i}^3\ln  (z_{n,i}^1)  - 
				\Phi_{i}^3(\gamma) 
				\ln 
				(\Phi_{i}^1(\gamma))|^2+ \left\| e(z_{n}^1) - 
				e(\Phi^1(\gamma)) 
				\right\|^2\leq \frac{\theta^2}{\gamma^2}  
				\pnorm{\boldsymbol{z}_n - 
					\boldsymbol{\Phi}(\gamma)}^2$ }
			\STATE{{\tt V} $=1$}
			\ENDIF
			\ENDWHILE
			\STATE{$\gamma_n=\gamma$} 
			\STATE{$(x_n^1,x_n^2,x_n^3)=(\Phi^1(\gamma_n),\Phi^2(\gamma_n),\Phi^3(\gamma_n))$}
			\STATE{	$z_{n+1}^1= P_{[\eta_1,\eta_2]^N} \big(x_n^1 + 
				\gamma_n (M^* z_n^2 + \sum_{i=1}^p z_{n,i}^3 
				\ln(z_{n,i}^1) 
				)-\gamma_n (M^* x_n^2 + \sum_{i=1}^p  x_{n,i}^3 
				\ln(x_{n,i}^1) 
				)\big)$ }
			\STATE{	$z_{n+1}^2= P_{[-\alpha,\alpha]^r}(x_n^2 - 
				\gamma_n M 
				z_n^1 + \gamma_n M x_n^1)$ }
			\STATE{	$z_{n+1}^3=P_{\RP^p} (x_n^3 - \gamma_n e(z_n^1) + 
				\gamma_n e  (x_n^1))$ }
			\STATE{	
				$\boldsymbol{z}_{n+1}=(z_{n+1}^1,z_{n+1}^2,z_{n+1}^3)$
			}
			\STATE{ 
				$r_n=\mathcal{R}\big(\boldsymbol{z}_{n+1},
				\boldsymbol{z}_{n}\big)$ }
			\STATE{$n \to n+1$}
			\ENDWHILE 
			\RETURN{$\boldsymbol{z}_{n+1}$}
		\end{algorithmic}
	\end{algorithm}
	We compare Algorithm~\ref{alg:opti} with the algorithm propose 
	in 
	\cite{BricenoDavis2018} called FBHF and with the MATLAB's 
	\texttt{fmincon} (interior point). 
	
	To solve problem in \eqref{eq:probopti} with FBHF algorithm, we 
	consider $X =X_1\times X_2 \times X_3$ and the 
	operators (see \eqref{eq:incluAB} and \cite[Theorem 
	2.3]{BricenoDavis2018})
	
	\begin{equation}\label{eq:opBricenoDavis2018}
		A=\begin{pmatrix}
			\partial f (\hat{x})\\
			\partial g^*(\hat{u})\\
			N_{\RP^p} (\hat{v} )
		\end{pmatrix}, \quad B_1 = \begin{pmatrix}
			\nabla h (\hat{x})\\
			0\\
			0
		\end{pmatrix}, \quad B_2+B_3 = 
		\begin{pmatrix}
			M^* \hat{u} +\sum_{i=1}^p\hat{v}_i \nabla e_i (\hat{x})  
			\\
			-M \hat{x}\\
			-e(\hat{x})
		\end{pmatrix}.
	\end{equation}
	In our numerical experiments, we generate $20$ random 
	realizations of 
	$A$, $M$, $z$, and $r_1,\ldots, r_n$ for dimensions  $n=m \in 
	\{600,900,1200\}$ and $r \in \{n/3, n/2,2n/3\}$. In each 
	realization we define $a_i=9$ for $i\in \{ 1,\ldots,n\}$, $\alpha 
	= 
	0.05$, and
	$y^0 = \hat{y}_0$ and $y^1=\hat{y}_1 + \text{rand}(n)$, where 
	$\hat{y}_1$ and $\hat{y}_2$ satisfies 
	$e(\hat{y}_0)=e(\hat{y}_1)=0$. 
	For Algorithm~\ref{alg:opti} we consider $\sigma = 0.99$. For 
	FBHF we 
	consider $\varepsilon = 0.8$, $\theta= \sqrt{1-\varepsilon}/2$, 
	$\sigma= 0.99$, the maximally monotone operator $A$, the 
	cocoercive 
	operator $B_1$, and the continuous operator $B_1+B_2$ on 
	\eqref{eq:opBricenoDavis2018} (see \cite[Theorem 
	2.3]{BricenoDavis2018}).

	In Table~\ref{t:1} we provide the 
	average time and iterations to achieve a tolerance 
	$\epsilon=10^{-6}$ for the instances mentioned above. We can 
	observe 
	that, for each instance, Algorithm~\ref{alg:opti} is more 
	efficient 
	than the method FBHF and \texttt{fmincon}. 
	Algorithm~\ref{alg:opti} 
	and FBHF are similar in number of iterations, but each iteration 
	of 
	FBHF is more expensive in time than Algorithm~\ref{alg:opti}. 
	This is 
	because FBHF needs, additionally, to activate the operators 
	$M^*$ 
	and 
	$M$ in each line search. This difference is larger as the 
	dimension 
	of the problem increases. Although \texttt{fmincon} needs less 
	iterations than Algorithm~\ref{alg:opti} and FBHF to reach the 
	stop 
	criterion, each iteration is very expensive in CPU time. Indeed, 
	Algorithm~\ref{alg:opti} reaches the stop criterion in 20\% of 
	the 
	time that \texttt{fmincon} takes.
	
	\begin{table}
		{ \caption{ Average time and average number of iterations of 
				20 
				random realizations of problem in 
				\eqref{eq:numericproblem} for 
				Algorithm~\ref{alg:opti}, FBHF, and \texttt{fmincon}. 
				\label{t:1}}
			\begin{center}
				\begin{tabular}{|c|c|c|c|c|c|c|c|}\cline{2-8}
					\multicolumn{1}{c}{ }	& 
					\multicolumn{1}{|c}{$\epsilon=10^{-6}$} & 
					\multicolumn{2}{|c|}{$ 
						N_2=N_1/3$}&\multicolumn{2}{c|}{$N_2=N_1/2$} 
					&\multicolumn{2}{c|}{ $N_2=2N_1/3$}\\  \hline
					$N_1$ & Algorithm &  Av. Time (s)& Av. Iter & Av. 
					Time 
					(s)& Av. 
					Iter & 
					A. Time (s)& A. Iter  \\ \hline  \hline
					\multirow{3}{*}{$600$}& Alg.~\ref{alg:opti} &  
					6.82 & 
					7845  & 9.61
					& 10384 & 15.79 & 16136  \\ \cline{2-8} 
					&FBHF  &  10.47 & 8280  & 14.22 & 10885 & 23.28 & 
					16772 
					\\ 
					\cline{2-8} 
					&\texttt{fmincon}  &  52.52 & 238  & 66.25 & 276 
					& 69.78 
					& 
					251	 \\ \hline  
					\hline
					\multirow{3}{*}{$900$} & Alg.~\ref{alg:opti}
					& 19.26 & 8185 & 28.89  & 11932 & 52.69 & 20653 
					\\ 
					\cline{2-8}
					& FBHF  & 31.28 & 8568 &46.06& 12375 & 84.17 & 
					21757\\ 
					\cline{2-8}
					& \texttt{fmincon} &  256.71 & 350  & 309.33  & 
					408 & 
					292.21 
					& 368	 \\ \hline	
					\hline
					\multirow{3}{*}{$1200$}& Alg.~\ref{alg:opti}  & 
					36.01 & 
					8809 &  
					62.82 & 14490& 110.76 & 24778   \\ 
					\cline{2-8}
					&FBHF  &  59.41 & 9231 &98.60 & 14783 & 174.08 & 
					25633\\ 
					\cline{2-8}
					&\texttt{fmincon}  & 694.86 & 457& 839.06 & 528  
					& 790.66 
					& 
					462	 \\ \hline 
				\end{tabular}
		\end{center}}
	\end{table}

	\section{Conclusions}
	We provide an algorithm for finding a zero of the sum of a 
	maximally 
	monotone, a cocoercive, a monotone-Lipschitzian, and a 
	monotone-continuous operators in a Hilbertian setting which 
	splits 
	their influence in its implementation. The proposed method 
	exploits the intrinsic properties of each operator activating 
	implicitly the maximally monotone operator via its resolvent, 
	explicitly the cocoercive and the monotone-Lipschitzian operator, 
	and for the continuous operator activation a line search 
	procedure is 
	needed. This method generalizes previous results in 
	\cite{BricenoDavis2018,lionsandmercier1979,Tseng2000SIAM} and it 
	is more efficient than other algorithms in the literature when 
	applied to 
	non-linearly constrained convex optimization problems.
	
	\section*{acknowledgements}
		The first author thanks the support of  Centro de 
		Modelamiento 
		Matemático (CMM), ACE210010 and FB210005, BASAL funds for 
		centers of excellence, grant Redes 180032, and grant 
		FONDECYT 1190871 
		from ANID-Chile. The second author thanks the support 
		of ANID-Subdirección de Capital Humano/Doctorado 
		Nacional/2018-21181024 and by the Direcci\'on de Postgrado y 
		Programas from 
		UTFSM through Programa de Incentivos a la Iniciaci\'on 
		Cient\'ifica 
		(PIIC).

\end{document}